\numberwithin{equation}{section}
\newcommand{\tr}{\mathrm{Tr}}
\newcommand{\inclu}[0] {\ar@{^{(}->}}
\newtheorem{thm}{Theorem}[section]
\newtheorem{theorem}{Theorem}[section]
\newtheorem{conjecture}{Conjecture}[section]
\newtheorem{lemma}[thm]{Lemma}
\crefname{claim}{claim}{claims}
\Crefname{claim}{Claim}{Claims}
\crefname{lem}{lemma}{lemmas}
\Crefname{lem}{Lemma}{Lemmas}
\crefname{algorithm}{algorithm}{algorithms}
\Crefname{algorithm}{Algorithm}{Algorithms}
\theoremstyle{remark}
\definecolor{blue}{RGB}{0,0,0}
\begin{document}

\title{Provably Faster Gradient Descent via Long Steps}

	 \author{Benjamin Grimmer\footnote{Johns Hopkins University, Department of Applied Mathematics and Statistics, \texttt{grimmer@jhu.edu}}}

	\date{}
	\maketitle

	\begin{abstract}
       This work establishes new convergence guarantees for gradient descent in smooth convex optimization via a computer-assisted analysis technique. Our theory allows nonconstant stepsize policies with frequent long steps potentially violating descent by analyzing the overall effect of many iterations at once rather than the typical one-iteration inductions used in most first-order method analyses. We show that long steps, which may increase the objective value in the short term, lead to provably faster convergence in the long term. A conjecture towards proving a faster $O(1/T\log T)$ rate for gradient descent is also motivated along with simple numerical validation.
	\end{abstract} 

    \section{Introduction}
{\color{blue} This work proposes a new analysis technique for gradient descent, establishing a path toward provably better convergence guarantees for smooth, convex optimization than is possible with existing approaches based on constant stepsizes. Instead,} our theory allows for nonconstant stepsize policies, periodically taking larger steps that may violate the monotone decrease in objective value typically needed by analysis. In fact, contrary to the common intuition, we show periodic long steps, which may increase the objective value in the short term, provably speed up convergence in the long term, with increasingly large gains as longer and longer steps are periodically included. 
This bears a similarity to accelerated momentum methods, which also depart from ensuring a monotone objective decrease at every iteration.

Establishing this requires a proof technique capable of analyzing the overall effect of many iterations at once rather than the typical (naive) one-iteration inductions used in most first-order method analyses. Our proofs are based on the Performance Estimation Problem (PEP) ideas of~\cite{drori2012PerformanceOF,taylor2017interpolation,taylor2017smooth}, which cast computing/bounding the worst-case problem instance of a given algorithm as a Semidefinite Program (SDP). We show that the existence of a feasible solution to a related SDP proves a descent guarantee after applying a corresponding pattern of nonconstant stepsizes, from which faster convergence guarantees follow. Our technique is very similar to that first proposed by Altschuler's Master's thesis~\cite[Chapter 8]{altschuler2018greed} which established repeating stepsize patterns of length two or three with faster contractions towards the minimizer for smooth, strongly convex minimization.
Most existing PEP literature uses computer-solves to guide the search for tighter convergence proofs~\cite{Klerk2016OnTW,taylor2017smooth,taylor2018Lyapunov,Dragomir2019OptimalCA,Taylor2019StochasticFM,Lieder2020OnTC,Ryu2020tightSplittingContraction,Gu2020tightPPM,Barre2020PrincipledAA,gupta2023branch} and inform the development of new, provably faster algorithms~\cite{Kim2016optimal,Drori2018EfficientFM,Kim2021gradient,Kim2021AccelPPM,taylor2022optimal}. Here computer outputs are directly used to constitute the proof, but are large (up to tens of thousands of rational numbers) and so may provide less guidance or intuition.

We consider gradient descent with a sequence of (normalized) stepsizes $h = (h_0,h_1,h_2,\dots)$ applied to minimize a convex function $f \colon \mathbb{R}^n\rightarrow \mathbb{R}$ with $L$-Lipschitz gradient by iterating
\begin{equation}
    x_{k+1} = x_k - \frac{h_k}{L} \nabla f(x_k) 
\end{equation}
given an initialization $x_0\in\mathbb{R}^n$. We assume throughout that a minimizer $x_\star$ of $f$ exists and its level sets are bounded $D=\sup\{\|x-x_\star\|_2 \mid f(x)\leq f(x_0)\}$\footnote{This assumption can likely be relaxed but eases our development herein.}. 
The classic convergence guarantee~\cite{Bertsekas2015ConvexOA} for gradient descent is that with constant stepsizes $h=(1, 1, 1, \dots)$, every $T>0$ has $ f(x_T) - f(x_\star) \leq LD^2/2(T+1)$.
{\color{blue} Using performance estimation techniques, a tight refinement of this bound was given by~\cite[Theorem 3.1]{drori2012PerformanceOF} (and elementary proof, avoiding the use of PEP, was given by~\cite{Teboulle2023}): every $T>0$ has
\begin{equation} \label{eq:distance-induction-style-result}
    f(x_T) - f(x_\star) \leq \frac{LD^2}{4T+2} \ .
\end{equation}
By tight, we mean a matching problem instance exists that attains the above inequality. Using stepsizes between one and two, this rate can be improved by another factor of two (see~\cite{taylor2017interpolation,Teboulle2023}). Stepsizes beyond length two have very little prior theory as one can no longer guarantee a decrease in objective value at each iteration.
}

{\color{blue} Here we provide an analysis technique capable of handling nonconstant stepsizes, periodically longer than one can guarantee descent for, finding increasingly fast convergence (in terms of constants) follows.} For example, consider gradient descent alternating stepsizes $h = (2.9,1.5,\ 2.9,1.5,\  \dots)$. Such a scheme is beyond the reach of traditional descent-based analysis as one may fear the stepsizes of $2.9$ can increase the function value individually more than the $1.5$ is guaranteed to decrease it. Regardless, {\color{blue} we show this ``long step'' converges with}
$$ f(x_T)-f(x_\star) \leq \frac{LD^2}{2.2 \times T} + O(1/T^2) $$
for every even $T>0$. See our Theorem~\ref{thm:two-step-rate} characterizing many such alternating stepsize methods. The $+O(1/T^2)$ terms throughout this work are only used to suppress two universal constants, namely above, we show there exist constants $\bar s$ and $C$ such that all even $T > 2\bar s$ have bound $LD^2/(2.2 \times T - C)$.

Using longer cycles, we derive further performance gains. For example, we show a carefully selected stepsize pattern of length $127$ periodically taking stepsizes of $370.0$ converges at a rate of $LD^2/(5.8346303\times T)$. Generally, given a stepsize pattern $h=(h_0,\dots,h_{t-1})\in\mathbb{R}^t$, we consider the gradient descent method repeatedly applying the pattern of stepsizes 
\begin{equation}\label{eq:pattern-GD}
    x_{k+1} = x_k - \frac{h_{(k \mathtt{\ mod\ } t)}}{L} \nabla f(x_k) \ .
\end{equation}
In Theorem~\ref{thm:straightforward-rates}, we give a convergence guarantee for any \emph{straightforward} stepsize pattern $h$ of
\begin{equation} \label{eq:pattern-GD-rate}
    f(x_{T}) - f(x_\star) \leq \frac{LD^2}{\mathrm{avg}(h) T} + O(1/T^2) 
\end{equation}
{\color{blue} where $\mathrm{avg}(h) = \frac{1}{t}\sum_{i=0}^{t-1}h_i$.}
See Section~\ref{sec:straightforward} for the formal introduction of this straightforwardness property.
Hence the design of provably faster nonconstant stepsize gradient descent methods amounts to seeking straightforward stepsize patterns with large average stepsize values. Certifying a given pattern is straightforward can be done via semidefinite programming (see our Theorem~\ref{thm:straightforward-certificate}). So the convergence analysis of such nonconstant stepsizes methods is a natural candidate for computer assistance. 
\begin{table}[p]\centering
\begin{tabular}{|c|p{60mm}|p{35mm}|}\hline
     Pattern Length & ``Straightforward'' Stepsize Pattern $h$ \newline (longest stepsize marked in bold) &  Convergence Rate \newline ($+O(1/T^2)$ omitted)\\ \hline
     $t=2$ & $({\bf 3-\eta}, 1.5) \qquad \text{for\ any\ } \eta\in (0,3)$ &  $\dfrac{LD^2}{(2.25-\eta/2) \times T}$\\     
     $t=3$ & $(1.5, {\bf 4.9}, 1.5)$ & $\dfrac{LD^2}{2.63333...\times T}$ \\   
     $t=7$ & $(1.5, 2.2, 1.5, {\bf 12.0}, 1.5, 2.2, 1.5)$ & $\dfrac{LD^2}{3.1999999\times T}$ \\
     $t=15$ &
     $(1.4, 2.0, 1.4, 4.5, 1.4, 2.0, 1.4, {\bf 29.7}, $\newline
     $ \ ~\ 1.4, 2.0, 1.4, 4.5, 1.4, 2.0, 1.4)$ & $\dfrac{LD^2}{3.8599999\times T}$ \\
     $t=31$ &
     $(1.4, 2.0, 1.4, 3.9, 1.4, 2.0, 1.4, 8.2, $\newline
     $ \ ~\ 1.4, 2.0, 1.4, 3.9, 1.4, 2.0, 1.4, {\bf 72.3},$\newline
     $ \ ~\ 1.4, 2.0, 1.4, 3.9, 1.4, 2.0, 1.4, 8.2, $\newline
     $ \ ~\ 1.4, 2.0, 1.4, 3.9, 1.4, 2.0, 1.4) $
     & $\dfrac{LD^2}{4.6032258\times T}$\\
     $t=63$ &
     $(1.4, 2.0, 1.4, 3.9, 1.4, 2.0, 1.4, 7.2, $\newline
     $ \ ~\ 1.4, 2.0, 1.4, 3.9, 1.4, 2.0, 1.4, 14.2,$\newline
     $ \ ~\ 1.4, 2.0, 1.4, 3.9, 1.4, 2.0, 1.4, 7.2, $\newline
     $ \ ~\ 1.4, 2.0, 1.4, 3.9, 1.4, 2.0, 1.4, {\bf 164.0},$ \newline
     $ \ ~\ 1.4, 2.0, 1.4, 3.9, 1.4, 2.0, 1.4, 7.2, $\newline
     $ \ ~\ 1.4, 2.0, 1.4, 3.9, 1.4, 2.0, 1.4, 14.2,$\newline
     $ \ ~\ 1.4, 2.0, 1.4, 3.9, 1.4, 2.0, 1.4, 7.2, $\newline
     $ \ ~\ 1.4, 2.0, 1.4, 3.9, 1.4, 2.0, 1.4)$
     & $\dfrac{LD^2}{5.2253968\times T}$\\
     $t=127$ & $(1.4, 2.0, 1.4, 3.9, 1.4, 2.0, 1.4, 7.2, $\newline
     $ \ ~\ 1.4, 2.0, 1.4, 3.9, 1.4, 2.0, 1.4, 12.6,$\newline
     $ \ ~\ 1.4, 2.0, 1.4, 3.9, 1.4, 2.0, 1.4, 7.2, $\newline
     $ \ ~\ 1.4, 2.0, 1.4, 3.9, 1.4, 2.0, 1.4, 23.5,$ \newline
     $ \ ~\ 1.4, 2.0, 1.4, 3.9, 1.4, 2.0, 1.4, 7.2, $\newline
     $ \ ~\ 1.4, 2.0, 1.4, 3.9, 1.4, 2.0, 1.4, 12.6,$\newline
     $ \ ~\ 1.4, 2.0, 1.4, 3.9, 1.4, 2.0, 1.4, 7.2, $\newline
     $ \ ~\ 1.4, 2.0, 1.4, 3.9, 1.4, 2.0, 1.4, {\bf 370.0}, $\newline
     $ \ ~\ 1.4, 2.0, 1.4, 3.9, 1.4, 2.0, 1.4, 7.2, $\newline
     $ \ ~\ 1.4, 2.0, 1.4, 3.9, 1.4, 2.0, 1.4, 12.6,$\newline
     $ \ ~\ 1.4, 2.0, 1.4, 3.9, 1.4, 2.0, 1.4, 7.2, $\newline
     $ \ ~\ 1.4, 2.0, 1.4, 3.9, 1.4, 2.0, 1.4, 23.5,$ \newline
     $ \ ~\ 1.4, 2.0, 1.4, 3.9, 1.4, 2.0, 1.4, 7.2, $\newline
     $ \ ~\ 1.4, 2.0, 1.4, 3.9, 1.4, 2.0, 1.4, 12.6,$\newline
     $ \ ~\ 1.4, 2.0, 1.4, 3.9, 1.4, 2.0, 1.4, 7.2, $\newline
     $ \ ~\ 1.4, 2.0, 1.4, 3.9, 1.4, 2.0, 1.4)$ & $\dfrac{LD^2}{5.8346303\times T}$\\ \hline
\end{tabular}
\caption{Improved convergence guarantees for Gradient Descent with stepsizes cycling through a ``straightforward'' pattern. Each convergence bound is proven by producing a certificate of feasibility for a related SDP, which is sufficient by our Theorems~\ref{thm:straightforward-rates} and~\ref{thm:straightforward-certificate}. Coefficients for $t\geq 7$ guarantees are slightly smaller than the ideal $\mathrm{avg}(h)$ due to rounding to produce an exact arithmetic certificate.}\label{tab:patterns-and-rates}
\end{table}

Table~\ref{tab:patterns-and-rates} shows straightforward stepsize patterns with increasingly fast convergence guarantees, each proven using a computer-generated, exact-arithmetic semidefinite programming solution certificate. 
Future works identifying longer straightforward patterns and other tractable families of nonconstant, periodically long stepsize policies will surely be able to improve on this work's particular guarantees.

The analysis of such nonconstant, long stepsize gradient descent methods has eluded the literature, with only a few exceptions. In 1953, Young~\cite{young1953} showed optimal, accelerated convergence is possible for gradient descent when minimizing a smooth, strongly convex quadratic function by using a careful nonconstant selection of $h_i$. Namely, Young set $h_0\dots h_{T-1}$ as one over the roots of the $T$-degree Chebyshev polynomial\footnote{A nice summary of this is given by the recent blog post~\cite{pedregosa2021nomomentum}.}.
Few works have shown faster convergence from long steps beyond quadratics. Two notable works have done so for smooth, strongly convex minimization:

Oymak~\cite{Oymak2021} showed substantial speed-ups for strongly convex functions with special bimodal structured Hessians.
{\color{blue} Closely in line with this work's reasoning, several recent Master's and doctoral theses have addressed the optimal design and analysis of one, two, or three steps of  gradient descent. The theses of Daccache~\cite{Daccache2019} and Eloi~\cite{Eloi2022} provide exhaustive characterizations of gradient descent's objective gap after two or three steps for smooth convex optimization. Their results do not provide a mechanism to be applied inductively, so no convergence rates follow from repeatedly applying the studied patterns.} For strongly convex optimization, Altschuler~\cite[Chapter 8]{altschuler2018greed} used PEP techniques to derive the optimal patterns of length $t=1,2,3$ for contracting either the distance to optimal or the gradient's norm. Inductively applying their improved contractions, one arrives at the same moral takeaway advanced by this work: SDP-based analysis can prove faster convergence follows from periodically taking longer steps, potentially violating descent. Importantly, their length two and three patterns are optimal, meaning their patterns give the best possible contraction factor. Note their patterns differ from those presented here, only being optimal for strongly convex problems.
{\it The primary contribution in this work is identifying a tractable analysis technique for general smooth, convex optimization and showing increasing performance gains for increasingly large $t>3$, continuing in this seventy-year-old direction.}

The search for long, straightforward stepsize patterns $h$ is hard; the set of all straightforward patterns is nonconvex, making local searches often unfruitful.
Our patterns of length $t=2^{m}-1$ in Table~\ref{tab:patterns-and-rates} were created by repeating the pattern for $t=2^{m-1}-1$ twice with a new long step added in between and (by hand) shrinking the long steps in the length $2^{m-1}-1$ subpatterns. This recursive pattern has strong similarities to the cyclic and fractal Chebyshev patterns for quadratic minimization considered by~\cite{LEBEDEV1971155,Agarwal2021AccelerationVF,Goujaud2022}, although we make no provable connections. This doubling procedure consistently increased $\mathrm{avg}(h)$ by $\approx 0.6$. We conjecture the following.
\begin{conjecture}
    For any $t\in\mathbb{N}$, there exists a straightforward stepsize pattern $h\in\mathbb{R}^t$ with $\mathrm{avg}(h) = \Omega(\log(t))$.
\end{conjecture}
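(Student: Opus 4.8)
The conjecture is not a corollary of the preceding theorems but rather a statement that the recursive doubling construction described above can be pushed to all scales. My plan is therefore to make that construction rigorous and turn the empirical ``$\mathrm{avg}(h)$ grows by $\approx 0.6$ per doubling'' observation into an inductive lower bound. Concretely, I would fix the family of lengths $t = 2^m - 1$ and posit a map sending a straightforward pattern $h^{(m-1)}\in\R^{2^{m-1}-1}$ to
$$ h^{(m)} = \big(\sigma(h^{(m-1)}),\ \ell_m,\ \sigma(h^{(m-1)})\big) \in \R^{2^m-1}, $$
where $\sigma$ shrinks the interior long steps of the subpattern and $\ell_m$ is a single new long step inserted at the center (exactly the operation used to build the length $2^m-1$ rows of Table~\ref{tab:patterns-and-rates}). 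By Theorem~\ref{thm:straightforward-certificate}, straightforwardness is equivalent to feasibility of an SDP, i.e.\ to the existence of nonnegative multipliers on the smooth-convex interpolation inequalities together with a PSD residual certifying a one-cycle potential decrease. The goal is then to prove two things by induction on $m$: (a) the construction carries a feasible certificate for $h^{(m-1)}$ to one for $h^{(m)}$; and (b) $\mathrm{avg}(h^{(m)}) \ge \mathrm{avg}(h^{(m-1)}) + c$ for a universal constant $c>0$. With a base case taken from a verified short pattern, these give $\mathrm{avg}(h^{(m)}) = \Omega(m) = \Omega(\log t)$, which is the conjecture.

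For step (a) I would build the certificate explicitly by gluing. Running gradient descent with $h^{(m)}$ splits the iterates into a first half $x_0\to x_s$, the long step $x_s \to x_{s+1} = x_s - (\ell_m/L)\nabla f(x_s)$, and a second half $x_{s+1}\to x_t$ with $t = 2s+1$. Applying the subpattern's certified one-cycle bound of the form $\tfrac{L}{2}\norm{y_s - \xstar}^2 + \big(\sum_i \sigma(h^{(m-1)})_i\big)(f(y_s)-f(\xstar)) \le \tfrac{L}{2}\norm{y_0 - \xstar}^2$ to each half and telescoping the distance-to-$\xstar$ terms leaves only the contribution of the long step to control. Writing $g_s = \nabla f(x_s)$, expanding $\norm{x_{s+1}-\xstar}^2$ and using the interpolation inequality between $x_s$ and $\xstar$ yields
$$ \norm{x_{s+1}-\xstar}^2 \le \norm{x_s-\xstar}^2 - \frac{2\ell_m}{L}\big(f(x_s)-f(\xstar)\big) + \frac{\ell_m(\ell_m-1)}{L^2}\norm{g_s}^2 . $$
The favorable middle term is precisely the large ``credit'' that inflates the effective $\mathrm{avg}(h^{(m)})$; the obstruction is the final term, which has the \emph{wrong} sign once $\ell_m > 1$. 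Cancelling $\tfrac{\ell_m(\ell_m-1)}{L^2}\norm{g_s}^2$ is the entire reason long steps are dangerous and the naive telescoping fails, so the technical heart of (a) is to show that the two half-certificates carry enough PSD slack in the $\norm{g_s}^2$ direction at their junction to absorb this term.

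The hard part will be making this quantitative and \emph{uniform} in $m$. It is not enough to preserve feasibility; I would need to strengthen the induction hypothesis to track a slack parameter — say, the coefficient of $\norm{g_s}^2$ left over at the splice point, or the least eigenvalue of the residual on the relevant subspace — and show it is large enough to support a long step $\ell_m$ of the size the average requires (empirically $\ell_m = \Theta(s\cdot\mathrm{avg})$), without this reserve decaying geometrically across doublings. There is a genuine tension here: a larger $\ell_m$ improves $\mathrm{avg}(h^{(m)})$ but both demands more absorbing slack and forces heavier shrinkage $\sigma$, which pushes $\mathrm{avg}$ back down. Closing the argument amounts to exhibiting a trade-off region, persistent for all $m$, in which the shrinkage needed for feasibility still leaves a net gain of at least $c$; this is exactly the uniform-in-scale estimate that the numerics suggest but that I do not see how to derive by a one-shot calculation. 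A complementary fallback would be a lemma that shrinking interior long steps preserves straightforwardness (so $\sigma(h^{(m-1)})$ remains certifiable), together with an attempt to borrow the self-similar, Chebyshev-type stepsize families of~\cite{LEBEDEV1971155,Goujaud2022} as an explicit ansatz — though bridging those quadratic-only constructions to the full smooth-convex interpolation setting is itself open and is, I expect, where the real difficulty of the conjecture ultimately lies.
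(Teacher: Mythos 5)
This statement is a \emph{conjecture}: the paper offers no proof of it, only the heuristic motivation that the doubling construction behind Table~\ref{tab:patterns-and-rates} empirically adds $\approx 0.6$ to $\mathrm{avg}(h)$ per doubling. Your proposal is likewise not a proof but a research plan, and you are candid that the decisive steps are open. That said, two of your intermediate steps would fail as written, beyond the admitted missing uniform-in-$m$ estimate. First, the ``one-cycle bound'' you propose to glue, $\tfrac{L}{2}\|y_s-x_\star\|^2 + (\sum_i \sigma(h^{(m-1)})_i)(f(y_s)-f(x_\star)) \le \tfrac{L}{2}\|y_0-x_\star\|^2$, is not what straightforwardness certifies. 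The paper's notion gives $p_{L,D}(\delta)\le \delta - \tfrac{\sum h_i}{LD^2}\delta^2$, a quadratic-in-$\delta$ decrease of the objective gap referenced to the global level-set diameter $D$; it is not a telescoping distance-based Lyapunov, so the splice-and-telescope step has no certified premise to start from. Second, even granting some one-cycle guarantee for the subpattern, applying it to the second half starting at $x_{s+1}$ requires the hypotheses of that guarantee to hold at $x_{s+1}$ (e.g., $\|x_{s+1}-x_\star\|\le D$, or $f(x_{s+1})\le f(x_0)$). The paper only establishes that iterates return to the initial level set at the \emph{end} of a full pattern; immediately after the central long step $\ell_m$ the iterate may have left it, so the inductive hypothesis cannot be invoked at the junction without a new argument controlling the overshoot.

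Your local computation for the long step is correct (the interpolation inequality between $x_s$ and $x_\star$ does give $\|x_{s+1}-x_\star\|^2 \le \|x_s-x_\star\|^2 - \tfrac{2\ell_m}{L}(f(x_s)-f(x_\star)) + \tfrac{\ell_m(\ell_m-1)}{L^2}\|g_s\|^2$), and you correctly identify that absorbing the $\tfrac{\ell_m(\ell_m-1)}{L^2}\|g_s\|^2$ term is the crux; this is consistent with the paper's own framing, where the entire difficulty is packaged into exhibiting a PSD certificate for the whole length-$t$ pattern at once rather than composing per-step inequalities. But the paper's mechanism for this (Theorem~\ref{thm:straightforward-certificate}) is a single SDP feasibility problem over the full pattern, verified by computer for each fixed $t$; it provides no compositional or inductive structure of the kind your plan needs, and the paper neither claims nor sketches one. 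So the gap is genuine: what is missing is precisely a gluing lemma for straightforwardness certificates together with a scale-uniform lower bound on the residual slack at the splice point, and neither the paper nor your proposal supplies it.
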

\noindent If true, this would likely yield convergence rates on the order of $O(1/(T\log(T)))$, strictly improving on the classic $O(1/T)$ guarantee. If such long patterns exist, one natural question is how close to the optimal $O(1/T^2)$ rate attained by momentum methods can be achieved by gradient descent with long steps. The numerics of~\cite[Figure 2]{gupta2023branch} suggested a $O(1/T^{1.178})$ rate may be possible. The theory of Lee and Wright~\cite{Lee2019} showed asymptotic $o(1/T)$ convergence for constant stepsize gradient descent (although no improved guarantees in finite-time were given), which also motivates the possibility for improved finite-time convergence guarantees.

\begin{figure}
\includegraphics[width=0.5\linewidth]{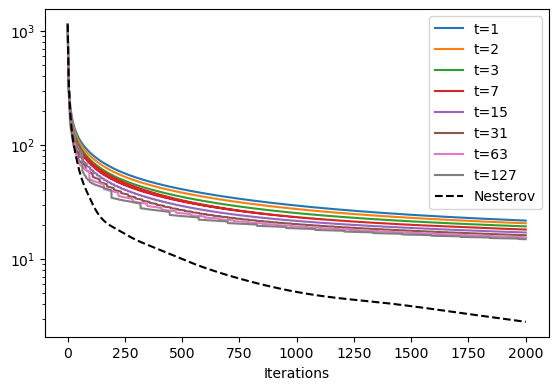}\includegraphics[width=0.5\linewidth]{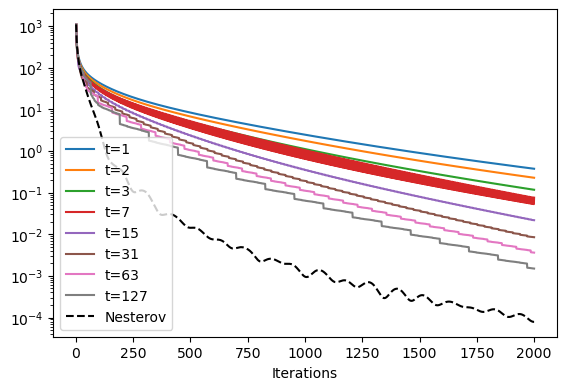}
\caption{\color{blue} Least squares problems minimizing $\|Ax-b\|^2_2$ (left) and $\|Ax-b\|^2_2 + \|x\|^2_2$ (right) with i.i.d.~normal entries in $A\in\mathbb{R}^{n\times n}$ and $b\in\mathbb{R}^n$ for $n=4000$. Objective gaps are plotted over $T=2000$ iterations with $h=(1)$ and with each pattern from Table~\ref{tab:patterns-and-rates}. Note this second objective is substantially more strongly convex, so its faster linear convergence is expected. For comparison, Nesterov's accelerated method (without modification to utilize strong convexity) is shown.}\label{fig:numeric}
\end{figure}

For strongly convex optimization (or, more generally, any problem satisfying a H\"older growth bound), the classic convergence rates for constant stepsize gradient descent are known to improve. We show the same improvements occur for any straightforward stepsize pattern (see Theorem~\ref{thm:straightforward-rates-SC}) with an additional gain of $\mathrm{avg}(h)$.
We validate that the convergence speed-ups of Table~\ref{tab:patterns-and-rates} actually occur on randomly generated least squares problems in Figure~\ref{fig:numeric}, seeing gains proportional to $\mathrm{avg}(h)$. These plots also showcase that descent is not ensured within the execution of a straightforward pattern as $t=7$ rapidly oscillates within each pattern while converging overall. {\color{blue} Note, gradient descent is not an optimal first-order method as accelerated methods can attain a faster $O(1/T^2)$ rate. A basic implementation of Nesterov's accelerated method is included in our numerics to showcase the degree of progress in closing this gap able to be accomplished via our longer steps.}

{\color{blue} Cyclic, periodically long stepsizes have been used in neural network training schedules~\cite{Loshchilov2016SGDRSG,Smith2015CyclicalLR,Smith2017SuperconvergenceVF}. Our results only apply to convex, deterministic problems. New techniques handling nonconvexity and stochasticity would need to be developed to describe the effect of long steps in such settings of machine learning.} \\[-8pt]

\noindent {\bf Outline.} In the remainder of this section, we informally sketch how our proof technique proceeds. Then Section~\ref{sec:straightforward} formally introduces our notion of straightforward stepsize patterns,
showing that any such pattern has a guarantee of the form~\eqref{eq:pattern-GD-rate}. Section~\ref{sec:certificate} shows the existence
of a solution to a certain semidefinite program implies straightforwardness. Section~\ref{sec:conclusion} concludes by outlining several future directions of interest enabled by and hopefully able to improve on this work.

\subsection{Sketch of Proof Technique - Reducing proving eventual descent to an SDP}
Our analysis works by guaranteeing a sufficient decrease is achieved after applying the whole pattern $h=(h_0,h_1,\dots,h_{t-1})$ of $t$ steps (but not necessarily descending at any of the intermediate iterates). Our notion of straightforward stepsize patterns aims to ensure that for all $\delta>0$ small enough, if $f(x_0)-f(x_\star) \leq \delta$, then $x_{t}$ will always attain a descent of at least
\begin{equation} \label{eq:descent-target}
    f(x_t) -f(x_\star) \leq \delta - \frac{\sum_{i=0}^{t-1} h_i}{LD^2} \delta^2 \ .
\end{equation}
For constant stepsizes equal to one, this amounts to $f(x_0-\nabla f(x_0)/L)\leq f(x_0) - \delta^2/LD^2$, a classic descent result that holds for all $L$-smooth convex $f$.

{\color{blue} To prove a descent lemma like~\eqref{eq:descent-target}, one can take a direct combination of several known inequalities. This is a well-known approach, equivalent to providing dual solutions to the dual of the associated performance estimation problem.} We are given the equalities $x_{k+1}=x_k-(h_k/L)\nabla f(x_k)$ and $\nabla f(x_\star)=0$ and as inequalities an initial distance bound
$$ \|x_0-x_\star\|^2_2 \leq D^2 \ ,$$
an initial objective gap bound
$$ f(x_0) - f(x_\star) \leq \delta \ , $$
and for any $x_i$ and $x_j$ with $i,j\in\{\star,0,1,2,\dots t\}$, convexity and smoothness imply\cite[(2.1.10)]{nesterov-textbook}
$$ f(x_i) \geq f(x_j) + \nabla f(x_j)^T(x_i-x_j) + \frac{1}{2L}\|\nabla f(x_i) - \nabla f(x_j)\|^2_2 \ . $$
For any nonnegative multipliers $v(\delta),w(\delta),\lambda_{i,j}(\delta)\geq 0$ (parameterized by the initial objective gap $\delta$), one can combine these inequalities to conclude that on every $L$-smooth convex function, gradient descent~\eqref{eq:pattern-GD} with stepsizes $h$ satisfies
\begin{align}
     v(\delta)&(\|x_0-x_\star\|^2_2 - D^2)\label{eq:total-inequality}\\
    + w(\delta)&(f(x_0) - f(x_\star) - \delta)\nonumber\\
     + \sum_{i,j\in\{\star,0,\dots t\}} \lambda_{i,j}(\delta)&\left(f(x_j) + \nabla f(x_j)^T(x_i-x_j) + \frac{1}{2L}\|\nabla f(x_i) - \nabla f(x_j)\|^2_2 - f(x_i)\right)\leq 0 \ .\nonumber
\end{align}
Our proof then proceeds by showing carefully selected functions $w(\delta),v(\delta),\lambda(\delta)$ reduce this inequality to guaranteeing~\eqref{eq:descent-target}. We find to prove our guarantees in Table~\ref{tab:patterns-and-rates}, it suffices to set $v(\delta)=\frac{\sum_{i=0}^{t-1} h_i}{LD^4}\delta^2$, $w(\delta) = 1 - \frac{2\sum_{i=0}^{t-1} h_i}{LD^2} \delta$, and use a linear function $\lambda(\delta) = \lambda + \delta \gamma$. For this choice of $v(\delta)$ and $w(\delta)$, our Theorem~\ref{thm:straightforward-certificate} shows~\eqref{eq:total-inequality} implies~\eqref{eq:descent-target} if $(\lambda,\gamma)$ is feasible to a certain SDP. Hence, proving a $t$ iteration descent lemma of the form~\eqref{eq:descent-target} can be done by semidefinite programming. Our Theorem~\ref{thm:straightforward-rates} then completes the argument by showing such a periodic descent guarantee implies the guarantee~\eqref{eq:pattern-GD-rate}.
    \section{Straightforward Stepsize Patterns}\label{sec:straightforward}
To analyze a given stepsize pattern, we first aim to understand its worst-case problem instance. We do so through the ``Performance-Estimation Problem'' (PEP) framework of~\cite{drori2012PerformanceOF}. Given a pattern $h$ and bounds on smoothness $L$, initial distance to optimal $D$, and initial objective gap $\delta$, the worst final objective gap able to be produced by one application of the stepsize pattern is given by
\begin{equation} \label{eq:value-function}
    p_{L,D}(\delta) := \begin{cases}
    \max_{x_0,x_\star,f} & f(x_t) - f(x_\star) \\
    \mathrm{s.t.} & f \text{ is convex, } L\text{-smooth}\\
    & \|x_0-x_\star\|_2 \leq D\\
    & f(x_0) - f(x_\star) \leq \delta \\
    & \nabla f(x_\star)=0 \\
    & x_{k+1} = x_k - \frac{h_k}{L} \nabla f(x_k) \quad \forall k=0,\dots, t-1 \ .
    \end{cases}
\end{equation}
{\color{blue} Note this problem is infinite-dimensional as it optimizes over functions $f\colon \mathbb{R}^n\rightarrow \mathbb{R}$ and vectors $x_0,x_\star\in\mathbb{R}^n$ for some given problem dimension $n$.}
The key insight of Drori and Teboulle~\cite{drori2012PerformanceOF} is that such infinite-dimensional problems can be relaxed to a finite-dimensional semidefinite program. Subsequently, Taylor et al.~\cite{taylor2017interpolation} showed this relaxation can be made an exact reformulation. We formalize and utilize these insights in Section~\ref{sec:certificate}. {\color{blue} We note one particular nice property here: $p_{L,D}$ is concave.
\begin{lemma}\label{lem:p-concave}
    For any $L,D>0$, $p_{L,D}$ is concave.
\end{lemma}
\begin{proof}
    Consider any $\delta^{(1)}, \delta^{(2)}\in\mathbb{R}$ and $\lambda\in(0,1)$. Denote feasible solutions to the problem defining $p_{L,D}(\delta^{(i)})$ for each $i=1,2$ by $f^{(i)}, x^{(i)}_0,x^{(i)}_\star$. Note if no feasible solution exists for either of these problems, one vacuously has concavity as
    $$\lambda p_{L,D}(\delta^{(1)}) + (1-\lambda) p_{L,D}(\delta^{(2)}) = -\infty \leq p_{L,D}(\lambda \delta^{(1)}+ (1-\lambda)\delta^{(2)}) \ . $$
    Note without loss of generality, $x^{(i)}_\star=0$ and $f^{(i)}(x^{(i)}_\star)=0$. Then one can easily check $\lambda f^{(1)} + (1-\lambda)f^{(2)}, \lambda x_0^{(1)} + (1-\lambda)x_0^{(2)}, \lambda x_\star^{(1)} + (1-\lambda)x_\star^{(2)}$ is feasible to the problem defining $p_{L,D}(\lambda \delta^{(1)}+ (1-\lambda)\delta^{(2)})$. Consequently, taking the supremum over choices of $f^{(i)},x_0^{(i)}$ gives concavity as
    \begin{align*}
        \lambda p_{L,D}(\delta^{(1)}) + (1-\lambda) p_{L,D}(\delta^{(2)}) \leq p_{L,D}(\lambda \delta^{(1)}+ (1-\lambda)\delta^{(2)}) \ .
    \end{align*}
\end{proof}
}

Generally, the worst-case functions $f$ attaining~\eqref{eq:value-function} can be nontrivial. To find a tractable family of stepsizes for analysis, we focus on ones where this worst-case behavior is no worse than a simple one-dimensional setting:
Consider the one-dimensional (nonsmooth) convex function linearly decreasing from $x_0$ to $x_\star$ and constant thereafter. That is, given $L,D,\delta$, consider the problem instance $ f(x)=\max\{\delta x/D,0\}$, $x_0=D$, and $x_\star=0$. Provided $\delta$ is small enough (i.e., $\delta \leq LD^2/\sum_{i=0}^{t-1} h_i$), the gradient descent iteration $x_{k+1} = x_k - \frac{h_k}{L} 
\nabla f(x_k)$ has
\begin{align}
    \delta_t = \delta_0 - \frac{\sum_{i=0}^{t-1} h_i}{LD^2}\delta_0^2 \label{eq:one-dimensional-example-rate}
\end{align}
where $\delta_k=f(x_k)-f(x_\star)$ denotes the objective gap.
In this example, gradient descent spends all $t$ iterations moving straight forward in a line along the slope. So the descent achieved is just controlled by the objective's slope $\delta_0/D$ (squared) and the total length of steps taken $\sum_{i=0}^{t-1} h_i/L$.

We say that a stepsize pattern $h$ is straightforward if its worst-case behavior (over all smooth functions, as defined in~\eqref{eq:value-function}) is no worse than this one-dimensional piecewise linear setting for $\delta$ small enough.
Formally, we say that a stepsize pattern $h$ {\color{blue} of length $t$} is \emph{straightforward} if for some $\Delta\in(0,1/2]$,
$$ p_{L,D}(\delta)\leq \delta - \frac{\sum_{i=0}^{t-1} h_i}{LD^2}\delta^2 \qquad \forall \delta\in[0,LD^2\Delta] $$
for any $L,D>0$.
Our analysis of gradient descent with long steps then proceeds by:\\
{\it (i) Certifying $h$ is straightforward (able to be computer automated) \hfill see Theorem~\ref{thm:straightforward-certificate},\\
(ii) Solving a resulting recurrence, showing $\delta_{T}\leq \frac{LD^2}{\mathrm{avg}(h) T} + O(1/T^2)$ \hfill see Theorem~\ref{thm:straightforward-rates}.\\
}
The second step is simpler, so we address it first. Additionally, we show the same factor of guarantee improvement $\mathrm{avg}(h)$ carries over to more structured domains like strongly convex optimization.

\subsection{Convergence Guarantees for Straightforward Stepsize Patterns} \label{subsec:rate-proof}
Note straightforwardness provides no guarantees on the intermediate objective values at iterations $k=1,2,\dots t-1$. We show in our theorem below descent at every $t$th iteration. 
To allow for some numerical flexibility, we say a pattern is $\epsilon$\emph{-straightforward} if for some $\Delta\in(0,1/2]$, all $\delta\in[0,LD^2\Delta]$ have value function bounded by $p_{L,D}(\delta) \leq \delta - \frac{\sum_{i=0}^{t-1} (h_i - \epsilon)}{LD^2}\delta^2$.

\begin{theorem}\label{thm:straightforward-rates}
    Consider any $L$-smooth, convex $f$. If $h=(h_0,\dots,h_{t-1})$ is $\epsilon$-straightforward with parameter $\Delta\in (0,1/2]$, then gradient descent~\eqref{eq:pattern-GD} has, for any $s\in\mathbb{N}$, after $T=st$ gradient steps
    \begin{equation} \label{eq:full-rate-statement}
        f(x_{T}) - f(x_\star) \leq 
    \begin{cases}
        (1-\sum_{i=0}^{t-1} (h_i - \epsilon)\Delta)^s(f(x_0)-f(x_\star)) & \text{ if } s \leq \bar s\\
        \dfrac{LD^2}{(\mathrm{avg}(h) - \epsilon)(T-\bar s t) + \frac{1}{\Delta}} & \text{ if } s > \bar s 
    \end{cases}
    \end{equation}
    where $\bar s = \left\lceil \frac{\log\left(\frac{f(x_0)-f(x_\star)}{LD^2\Delta}\right)}{\log(1-\sum_{i=0}^{t-1} (h_i - \epsilon)\Delta)}\right\rceil$. In particular, suppressing lower-order terms, this guarantee is
    $$ f(x_{T}) - f(x_\star) \leq \frac{LD^2}{(\mathrm{avg}(h) -\epsilon) T} + O(1/T^2) \ . $$
\end{theorem}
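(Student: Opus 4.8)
The plan is to track the objective gap only at the pattern boundaries, writing $\delta_k := f(x_{kt}) - f(x_\star)$, and to reduce the entire theorem to analyzing the scalar recursion that $\epsilon$-straightforwardness forces on the sequence $(\delta_k)$. The first thing I would establish is the monotone decrease $\delta_{k+1}\le \delta_k$, since this guarantees $f(x_{kt})\le f(x_0)$ and hence $\|x_{kt}-x_\star\|_2\le D$ for every $k$, which is exactly what keeps the distance constraint inside the value function $p_{L,D}$ legitimate at each restart. With that in place, the key structural observation is a restart argument: the tail trajectory $(x_{kt},x_{kt+1},\dots)$ is itself a feasible instance for the value function with initial gap $\delta_k$, so $\delta_{k+1}\le p_{L,D'}(\delta_k)$ for any admissible radius $D'\ge \|x_{kt}-x_\star\|_2$. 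Abbreviating $A := \sum_{i=0}^{t-1}(h_i-\epsilon) = t(\mathrm{avg}(h)-\epsilon)$, I would also record that straightforwardness forces $A\Delta\le 1$, since otherwise the bound $\delta-\tfrac{A}{LD^2}\delta^2$ would be negative at $\delta=LD^2\Delta$ whereas $p_{L,D}\ge 0$; this ensures the contraction factor below lies in $[0,1)$.

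Second I would split into two regimes according to whether $\delta_k$ lies in the validity range $[0,LD^2\Delta]$. In the small-gap regime $\delta_k\le LD^2\Delta$, applying $\epsilon$-straightforwardness directly at the fixed $D$ gives the quadratic recursion $\delta_{k+1}\le \delta_k-\tfrac{A}{LD^2}\delta_k^2$. The large-gap regime $\delta_k> LD^2\Delta$ is the step I expect to be the main obstacle, because the property is only asserted for $\delta\le LD^2\Delta$. The resolution is a rescaling: since $\epsilon$-straightforwardness holds for \emph{every} $L,D>0$ with the same $\Delta$, I invoke it with the inflated radius $\tilde D_k := \sqrt{\delta_k/(L\Delta)}\ge D$. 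This is legitimate because the genuine distance satisfies $\|x_{kt}-x_\star\|_2\le D\le \tilde D_k$, and because $\delta_k$ now sits exactly at the upper endpoint $L\tilde D_k^2\Delta$ of the validity interval. Evaluating the straightforward bound there collapses the quadratic term into a clean geometric contraction
\[ \delta_{k+1}\le \delta_k-\frac{A}{L\tilde D_k^2}\delta_k^2 = \delta_k\left(1-A\Delta\right). \]

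Third, I would solve the resulting two-phase recursion. While $\delta_k> LD^2\Delta$ the geometric bound compounds to $\delta_s\le (1-A\Delta)^s\delta_0$, which is precisely the first branch, and the crossover index $\bar s$ is the first $s$ at which this geometric bound falls to the threshold $LD^2\Delta$, yielding the stated logarithmic formula. Once $\delta_{\bar s}\le LD^2\Delta$, the monotonicity established above keeps every later $\delta_k$ inside the validity interval, so the quadratic recursion governs the entire tail. There I use the standard reciprocal trick: rearranging $\delta_{k+1}\le \delta_k\left(1-\tfrac{A}{LD^2}\delta_k\right)$ and applying $1/(1-x)\ge 1+x$ gives $\tfrac{1}{\delta_{k+1}}\ge \tfrac{1}{\delta_k}+\tfrac{A}{LD^2}$, and telescoping from $\bar s$ to $s$ together with $\delta_{\bar s}\le LD^2\Delta$ produces $\tfrac{1}{\delta_s}\ge \tfrac{1/\Delta+A(s-\bar s)}{LD^2}$. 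Inverting and substituting $A=t(\mathrm{avg}(h)-\epsilon)$, $T=st$, and $T-\bar s t=t(s-\bar s)$ reproduces the second branch. Finally, treating $\bar s,t,\Delta,L,D$ as constants independent of $T$, a first-order expansion of $\tfrac{LD^2}{(\mathrm{avg}(h)-\epsilon)(T-\bar s t)+1/\Delta}$ gives the advertised $\tfrac{LD^2}{(\mathrm{avg}(h)-\epsilon)T}+O(1/T^2)$.
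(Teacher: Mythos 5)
Your proof is correct, and its overall skeleton --- tracking the objective gap only at pattern boundaries, establishing a two-case recurrence (quadratic decrease below the threshold $LD^2\Delta$, geometric contraction above it), deducing monotonicity so that $\|x_{kt}-x_\star\|\le D$ propagates inductively, and then solving the geometric phase followed by the quadratic phase via the reciprocal/telescoping trick --- is exactly the paper's. The one place you genuinely diverge is the large-gap regime $\delta_k>LD^2\Delta$. The paper handles it by asserting that $p_{L,D}(\cdot)$ is concave with $p_{L,D}(0)=0$, so the endpoint bound at $LD^2\Delta$ extrapolates linearly to all larger $\delta$, giving the contraction factor $1-\sum_{i=0}^{t-1}(h_i-\epsilon)\Delta$. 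You instead exploit that straightforwardness is asserted for \emph{every} $L,D>0$ with the same $\Delta$, inflating the radius to $\tilde D_k=\sqrt{\delta_k/(L\Delta)}\ge D$ so that $\delta_k$ lands exactly at the endpoint $L\tilde D_k^2\Delta$ of the enlarged validity interval; this is legitimate because the true iterate still satisfies $\|x_{kt}-x_\star\|_2\le D\le\tilde D_k$, and it produces the identical contraction. Your route is arguably the more self-contained one: the paper's concavity claim for the infinite-dimensional value function is stated without justification (it ultimately rests on the SDP reformulation of Section 3), whereas your rescaling uses nothing beyond the definition of straightforwardness. You also make explicit the check $\sum_{i=0}^{t-1}(h_i-\epsilon)\Delta\le 1$ (from $p_{L,D}\ge 0$), which the paper needs implicitly for the contraction factor to be nonnegative but never records. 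One imprecision you share with the paper: both arguments establish the geometric bound $(1-\sum_{i=0}^{t-1}(h_i-\epsilon)\Delta)^s\delta_0$ only while the gap remains above the threshold, whereas the first branch of \eqref{eq:full-rate-statement} is claimed for all $s\le\bar s$; if the gap crosses the threshold early, the quadratic regime only guarantees the weaker per-step factor $1-\tfrac{\sum_i(h_i-\epsilon)}{LD^2}\delta_k\ge 1-\sum_i(h_i-\epsilon)\Delta$, so that branch requires a (minor, easily supplied) extra word in either write-up.
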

\begin{proof}
    For any $\epsilon\geq 0$, we begin by showing $\epsilon$-straightforwardness implies that for any $s\in\mathbb{N}$, the recurrence relation
    \begin{equation}\label{eq:straightforward-recurrence}
        \delta_{(s+1)t} \leq
    \begin{cases} \delta_{st} - \frac{\sum_{i=0}^{t-1} (h_i - \epsilon)}{LD^2} \delta_{st}^2 & \text{ if }\delta_{st}\leq LD^2\Delta \\
    \left(1 - \sum_{i=0}^{t-1} (h_i - \epsilon) \Delta\right)\delta_{st} & \text{ if }\delta_{st} > LD^2\Delta \ .
    \end{cases}
    \end{equation}
    Consider first $s=0$. The first case of~\eqref{eq:straightforward-recurrence} follows from the definition of straightforwardness as $\delta_t\leq p_{L,D}(\delta_0)\leq \delta_0 - \frac{\sum_{i=0}^{t-1} (h_i - \epsilon)}{LD^2} \delta_0^2$. {\color{blue} For the second case of~\eqref{eq:straightforward-recurrence}, first note that $p_{L,D}(\cdot)$ is concave by Lemma~\ref{lem:p-concave}.} Then since $p_{L,D}(0)=0$ and $p_{L,D}\left(LD^2\Delta\right) \leq LD^2\left(\Delta - \sum_{i=0}^{t-1} (h_i - \epsilon) \Delta^2\right)$, every $\delta_0 > LD^2\Delta$ must have
    $$ \delta_t \leq p_{L,D}(\delta_0) \leq \frac{\delta_0}{LD^2\Delta}p_{L,D}\left(LD^2\Delta\right) \leq \delta_0\left(1 -\sum_{i=0}^{t-1} (h_i - \epsilon)\Delta\right) \ . $$
    As a result, applying the sequence of steps from the straightforward pattern $h$ from $x_0$ yields $\delta_t\leq \delta_0$. Hence $\|x_{t}-x_\star\|\leq D$ since $x_{t}$ lies in the initial level set $\{x \mid f(x)\leq f(x_0)\}$. Thus the above reasoning can apply inductively, giving the claimed recurrence. 
    
    Next, we show this recurrence implies the claimed convergence guarantee~\eqref{eq:full-rate-statement}. Suppose first, $\delta_{st}$ is larger than $LD^2\Delta$. Then applying the pattern $h$ contracts the objective gap\footnote{Interestingly, this contraction factor is independent of $L,D$. As a result, problem conditioning plays a minimal role in this initial phase of convergence. Instead, only $h$ and the associated straightforwardness parameter $\Delta$ matter.}, inductively giving $\delta_{st} \leq (1-\sum_{i=0}^{t-1} (h_i - \epsilon)\Delta)^s\delta_0$. After at most $\bar s$ executions of the stepsize pattern, one must have $\delta_{st} \leq LD^2\Delta$.
    Afterward, for any $s > \bar s$, the objective gap decreases by at least
    $$ \delta_{(s+1)t} \leq \delta_{st} - \frac{\sum_{i=0}^{t-1} (h_i - \epsilon)}{LD^2} \delta_{st}^2 \ . $$
    Solving this recurrence with the initial condition $\delta_{\bar s t} \leq LD^2\Delta$ gives
    \begin{align*}
        \delta_{st} \leq \frac{LD^2}{\sum_{i=0}^{t-1} (h_i - \epsilon)(s-\bar s) + \frac{1}{\Delta}}\ .
    \end{align*}
\end{proof}

\subsection{Faster Convergence for Straightforward Patterns given Growth Bounds} \label{subsec:rate-proof-SC}
A function $f$ is $\mu$-strongly convex if $f - \frac{\mu}{2}\|\cdot\|^2_2$ is convex. This condition is well-known to lead to linear convergence for most first-order methods in smooth optimization. More generally, faster convergence occurs whenever $f$ satisfies a H\"older growth or error bound condition. Here we consider settings where all $x\in\mathbb{R}^n$ within the level set $f(x)\leq f(x_0)$ satisfy
\begin{equation}\label{eq:Holder-growth}
    f(x) - f(x_\star) \geq \frac{\mu}{q}\|x-x_\star\|^q_2 \ . 
\end{equation}
Strong convexity implies this condition with $q=2$ and leads gradient descent with constant $h=(1,1,\dots)$ to converge at a rate of $O((1-\mu/L)^T)$. When $q>2$, improved sublinear guarantees of $O((L/\mu^{2/q}T)^{q/(q-2)})$ follow. Below, we show that any straightforward stepsize pattern enjoys the same convergence improvements, gaining a similar factor of $\mathrm{avg}(h)$.
{\color{blue} 
\begin{theorem}\label{thm:straightforward-rates-SC}
    Consider any $L$-smooth, convex objective $f$ satisfying~\eqref{eq:Holder-growth}. If $h=(h_0,\dots,h_{t-1})$ is $\epsilon$-straightforward with parameter $\Delta\in(0,1/2]$, then gradient descent~\eqref{eq:pattern-GD} has, for any $T=st$
    $$ f(x_{T}) - f(x_\star) \leq \left(1-\min\left\{\Delta,\ \frac{\mu}{2L}\right\}\sum_{i=0}^{t-1} (h_i - \epsilon) \right)^s(f(x_0)-f(x_\star)) $$
    when $q=2$ and when $q>2$ and $s > \bar s := \left\lceil \frac{\log\left(\frac{f(x_0)-f(x_\star)\mu^{2/(q-2)}}{q^{2/(q-2)}(L\Delta)^{q/(q-2)}}\right)}{\log(1-\sum_{i=0}^{t-1} (h_i - \epsilon)\Delta)}\right\rceil$ has
    $$
        f(x_{T}) - f(x_\star) \leq 
        q\left(\dfrac{L}{\mu^{2/q}(q-2)(\mathrm{avg}(h)-\epsilon)((s-\bar s)t)}\right)^{q/(q-2)} \ . 
    $$
\end{theorem}
\begin{proof}
    Let $D_k = \sup\{\|x-x_\star\| \mid f(x)\leq f(x_k)\}$ denote the size of each level set visited by gradient descent. The growth bound~\eqref{eq:Holder-growth} ensures $D_k \leq (\frac{q}{\mu}\delta_k)^{1/q}$. Then the recurrence~\eqref{eq:straightforward-recurrence} implies
    $$
        \delta_{(s+1)t} \leq
    \begin{cases} \delta_{st} - \frac{\mu^{2/q}\sum_{i=0}^{t-1}(h_i-\epsilon)}{Lq^{2/q}} \delta_{st}^{2-2/q} & \text{ if }\delta_{st}\leq LD^2_{st}\Delta \\
    \left(1 - \sum_{i=0}^{t-1} (h_i - \epsilon) \Delta\right)\delta_{st} & \text{ if }\delta_{st} > LD_{st}^2\Delta \ .
    \end{cases}
    $$
    One can bound this recurrence relation by the maximum of the two cases. If $q=2$, both cases give a contraction and so
    $$ \delta_{st} \leq \left(1-\min\left\{\Delta,\ \frac{\mu}{2L}\right\}\sum_{i=0}^{t-1} (h_i - \epsilon) \right)^s\delta_0 \ . $$    
    If $q>2$, one has
    $$ \delta_{(s+1)t} \leq \max\left\{\delta_{st} - \frac{\mu^{2/q}\sum_{i=0}^{t-1}(h_i-\epsilon)}{Lq^{2/q}} \delta_{st}^{2-2/q},\ \left(1 - \sum_{i=0}^{t-1} (h_i - \epsilon) \Delta\right)\delta_{st} \right\} \ . $$
    Solving this recurrence gives linear convergence until the objective gap is less than $\left(\frac{q^{2/q}L\Delta}{\mu^{2/q}}\right)^{\frac{q}{q-2}}$ is reached, which must occur by iteration $\bar s$, and then afterwards gives a sublinear convergence rate (for example, see~\cite[Lemma A.1]{Diaz2023bundleRates} for this calculation), giving the claim.
\end{proof}

Note that the above bound for the case of $q>2$ is independent of $D$. Asymptotically, this is reasonable as better bounds on distance to optimal are eventually available, namely $(\frac{q}{\mu}\delta_k)^{1/q}$. One could give a bound depending on $D=D_0$ early onby using the stronger bound $D_k \leq \min\{(\frac{q}{\mu}\delta_k)^{1/q}, D_0\}$. However, once the gap is small enough for the first term to dominate, the bound will again be independent of $D_0$.
}
    \section{Certificates of Straightforwardness}\label{sec:certificate}
All that remains is to show how one can certify the straightforwardness of a stepsize pattern.
We do this in two steps. First, Section~\ref{subsec:reform} shows that $p_{L,D}(\delta)$ is upper bounded by an SDP minimization problem using previously developed PEP techniques. Hence straightforwardness is implied by showing the SDP corresponding to each $\delta\in[0,LD^2\Delta]$ has a sufficiently good feasible solution.
Second, Section~\ref{subsec:certificate} shows that another semidefinite programming feasibility problem can certify that such an interval of solutions exists.

\subsection{The Worst-Case Value Function is Upper Bounded by an SDP} \label{subsec:reform}
We first reformulate the infinite-dimensional problem~\eqref{eq:value-function} as a finite-dimensional nonconvex quadratically constrained quadratic problem (see~\eqref{eq:QCQP-form}), relax that formulation into an SDP (see~\eqref{eq:SDP-form}), and then upper bound the SDP by its dual problem (see~\eqref{eq:dual-form}). This process was been developed by~\cite{drori2012PerformanceOF,taylor2017interpolation,taylor2017smooth} and is carried out in our particular setting below.\\[-8pt]

\noindent {\bf Step 1: A QCQP reformulation.} First, as proposed by Drori and Teboulle~\cite{drori2012PerformanceOF}, one can discretize the infinite-dimensional problem defining $p_{L,D}(\delta)$ over all possible objective values $f_k$ and gradients $g_k$ at the points $x_k$ with $k\in I_t^\star := \{\star,0,1,\dots t\}$ as done below. Using the interpolation theorem of Taylor et al.~\cite{taylor2017interpolation}, this is an exact reformulation rather than a relaxation, giving
\begin{equation}
p_{L,D}(\delta) = \begin{cases}
    \max_{x_0,f,g} & f_t - f_\star\\
    \mathrm{s.t.} & f_i \geq f_j + g_j^T(x_i-x_j) + \frac{1}{2L}\|g_i-g_j\|^2_2 \qquad \forall i,j\in I_t^\star \\
    & \|x_0-x_\star\|^2_2 \leq D^2\\
    & f_0 - f_\star \leq \delta \\
    & x_\star=0,f_\star=0, g_\star=0 \\
    & x_{i+1} = x_i - \frac{h_i}{L} g_i \qquad \forall i=0,\dots, t-1
\end{cases}\label{eq:QCQP-form}
\end{equation}
where, without loss of generality, we have fixed $x_\star=0,f_\star=0, g_\star=0$. \\[-8pt]

\noindent {\bf Step 2: An SDP relaxation.} Second, one can relax the nonconvex problem~\eqref{eq:QCQP-form} to the following SDP as done in~\cite{taylor2017interpolation,taylor2017smooth,Drori2018EfficientFM}.
We follow the particular notational choices of~\cite{gupta2023branch}, which recently considered globally, numerically optimizing gradient descent's stepsizes with fixed total iterations, defining
\begin{align*}
H & :=[x_{0}\mid g_{0}\mid g_{1}\mid\ldots\mid g_{t}]\in\mathbb{R}^{d\times(t+2)}\ ,\\
G & :=H^TH\in\mathbb{S}_{+}^{t+2}\ ,\\
F & :=[f_{0}\mid f_{1}\mid\ldots\mid f_{t}]\in\mathbb{R}^{1\times(t+1)} \ , 
\end{align*}
with the following notation for selecting columns and elements of $H$ and $F$: 
\begin{align*}
\mathbf{g}_{\star} & :=0\in\mathbb{R}^{t+2},\;\mathbf{g}_{i}:=e_{i+2}\in\mathbb{R}^{t+2},\quad i\in[0:t]\\
\mathbf{x}_{0} & :=e_{1}\in\mathbb{R}^{t+2},\;\mathbf{x}_{\star}:=0\in\mathbb{R}^{t+2}\ ,\\
\mathbf{x}_{i} & :=\mathbf{x}_{0}-\frac{1}{L}\sum_{j=0}^{i-1}h_j\mathbf{g}_{j}\in\mathbb{R}^{t+2},\quad i\in[1:t]\\
\mathbf{f}_{\star} & :=0\in\mathbb{R}^{t+1},\;\mathbf{f}_{i}:=e_{i+1}\in\mathbb{R}^{t+1},\quad i\in[0:t] \ .
\end{align*}
This notation ensures
$
x_{i}=H\mathbf{x}_{i}$, $g_{i}=H\mathbf{g}_{i}$, and $f_{i}=F\mathbf{f}_{i}.
$
Furthermore,
for $i,j\in I_{t}^{\star}$, define 
\begin{align*}
A_{i,j}(h)& :=\mathbf{g}_{j}\odot(\mathbf{x}_{i}-\mathbf{x}_{j})\in\mathbb{S}^{t+2}\ ,\\
B_{i,j}(h)& :=(\mathbf{x}_{i}-\mathbf{x}_{j})\odot(\mathbf{x}_{i}-\mathbf{x}_{j})\in\mathbb{S}_{+}^{t+2}\ ,\\
C_{i,j}& :=(\mathbf{g}_{i}-\mathbf{g}_{j})\odot(\mathbf{g}_{i}-\mathbf{g}_{j})\in\mathbb{S}_{+}^{t+2}\ ,\\
a_{i,j}& :=\mathbf{f}_{j}-\mathbf{f}_{i}\in\mathbb{R}^{t+1}
\end{align*}
where $x\odot y= \frac{1}{2}(xy^T + yx^T)$ denotes the symmetric outer product. This
notation is defined so that $ g_{j}^T(x_{i}-x_{j}) =\tr GA_{i,j}(h)$, $\|x_{i}-x_{j}\|^{2}_2=\tr GB_{i,j}(h)$, and $\|g_{i}-g_{j}\|^{2}_2=\tr GC_{i,j}$ for any $i,j\in I_{t}^{\star}$.
Then the QCQP formulation~\eqref{eq:QCQP-form} can be relaxed to
\begin{align}
p_{L,D}(\delta)
\leq & \begin{cases}
\max_{F,G} & F\mathbf{f}_{t}\\
\textrm{s.t.} & Fa_{i,j}+\tr GA_{i,j}(h)+\frac{1}{2L}\tr GC_{i,j}\leq0,\quad \forall i,j\in I_{t}^{\star}:i\neq j \\
& -G\preceq 0\\
& \tr GB_{0,\star}\leq D^{2}\\
& F\mathbf{f}_{0} \leq \delta \ .
\end{cases}\label{eq:SDP-form}
\end{align}
{\color{blue} Equality holds above if one adds the constraint that $\mathrm{rank\ } G \leq n$ where $n$ is the dimension of problem instances considered when defining $p_{L,D}$. This constraint is vacuously true if the considered problem dimension $n$ exceeds $t+2$. Consequently, the QCQP~\eqref{eq:QCQP-form} and the SDP~\eqref{eq:SDP-form} are equivalent, provided one allows sufficiently high-dimensional objectives, but may differ if one restricts to finding the worst-case over lower-dimensional problem instances.} However, this equality is not needed for our analysis, so we make no such assumption. 

\noindent {\bf Step 3: The upper bounding dual SDP.} Third, note the maximization SDP~\eqref{eq:SDP-form} is bounded above by its dual minimization SDP by weak duality, giving
\begin{equation}
p_{L,D}(\delta) \leq
\begin{cases}
    \min_{\lambda,v,w,Z} & D^{2} v + \delta w \\
\textrm{s.t.} &
\sum_{i,j\in I_{t}^{\star}:i\neq j}\lambda_{i,j}a_{i,j}= a_{\star, t} - w a_{\star, 0} \\
&v B_{0,\star} +\sum_{i,j\in I_{t}^{\star}:i\neq j}\lambda_{i,j}\left(A_{i,j}(h)+\frac{1}{2L}C_{i,j}\right)=Z \\
&Z\succeq0 \\
&v,w\geq0,\;\lambda_{i,j}\geq0,\quad \forall i,j\in I_{t}^{\star}:i\neq j \ .
\end{cases}\label{eq:dual-form}
\end{equation}
Although it is not needed for our analysis, equality holds here as well (i.e., strong duality holds) due to~\cite[Theorem 6]{taylor2017interpolation}.

\subsection{An SDP Feasibility Certificate that implies Straightforwardness}\label{subsec:certificate}
The preceding bound~\eqref{eq:dual-form} establishes that $\epsilon\geq 0$-straightforwardness holds if for some $\Delta \in (0,1/2]$, every $\delta\in [0,LD^2\Delta]$ has a corresponding dual feasible solution with objective at most $\delta - \frac{\sum_{i=0}^{t-1} (h_i - \epsilon)}{LD^2} \delta^2$.
We claim that it suffices to fix $L=1,D=1$ without loss of generality. For any $L$-smooth $f$ with $\|x_0-x_\star\|\leq D$, this follows by instead considering minimizing $\tilde f(\tilde x) = \frac{1}{LD^2}f(D\tilde x)$. One can easily verify $\tilde f$ is $1$-smooth, has $\|\tilde x_0 - \tilde x_\star\|\leq 1$ for $\tilde x_0 = Dx_0, \tilde x_\star = Dx_\star$, and gradient descent $\tilde x_{k+1} = \tilde x_{k} - h_k \nabla \tilde f(x_k)$ produces exactly the iterates of $x_{k+1}=x_k - h_k/L\nabla f(x_k)$ rescaled by $D$. Hence $p_{L,D}(\delta) = LD^2 p_{1,1}(\delta/LD^2)$.

We restrict our search for dual certificates bounding $p_{1,1}(\delta)$ to a special case, which we numerically observed to hold approximately at the minimizers of~\eqref{eq:dual-form}: given $\delta$, fix $v=\sum_{i=0}^{t-1} (h_i + \epsilon)\delta^2$ and $w = 1 - 2\sum_{i=0}^{t-1} h_i \delta$. Noting this fixed variable setting has $v + \delta w = \delta - \sum_{i=0}^{t-1} (h_i - \epsilon)\delta^2$, $\epsilon$-straightforwardness follows if one can show feasible solutions with these fixed values exist.

Given $\delta$ and a selection of $\lambda\in\mathbb{R}^{(t+2)\times(t+2)}$ and fixing $v$ as above, we define
\begin{align}
    Z_{h,\epsilon}(\lambda, \delta) := \sum_{i=0}^{t-1} (h_i + \epsilon)\delta^2 B_{0,\star} +\sum_{i,j\in I_{t}^{\star}:i\neq j}\lambda_{i,j}\left(A_{i,j}(h)+\frac{1}{2}C_{i,j}\right) \ .
\end{align}
Observe that $Z_{h,\epsilon}(\lambda,\delta)$ is nearly linear: the first entry has the only nonlinear behavior, depending quadratically on $\delta$, with the rest depending only linearly on $\lambda$. Written in block form, we denote
\begin{align}
    Z_{h,\epsilon}(\lambda,\delta) =: \begin{bmatrix} \sum_{i=0}^{t-1} (h_i + \epsilon)\delta^2 & m_h(\lambda)^T \\ m_h(\lambda) & M_h(\lambda) \end{bmatrix} \label{eq:block-form}
\end{align}
where $m_h \colon \mathbb{R}^{(t+2)\times(t+2)} \rightarrow \mathbb{R}^{t+1}$ and $M_h \colon \mathbb{R}^{(t+2)\times(t+2)}\rightarrow \mathbb{R}^{(t+1)\times(t+1)}$ are linear functions. Certifying $p_{1,1}(\delta)\leq \delta - \sum_{i=0}^{t-1} (h_i - \epsilon)\delta^2$ for fixed $\delta$ then follows by showing the following spectral set is nonempty
\begin{align}
    \mathcal{R}_{h,\epsilon,\delta} = \left\{ \lambda\in\mathbb{R}^{(t+2)\times(t+2)} \mid \begin{array}{l}\sum_{i,j\in I_{t}^{\star}:i\neq j}\lambda_{i,j}a_{i,j}= a_{\star, t} - \left(1 - 2\sum_{i=0}^{t-1} h_i\delta\right)a_{\star, 0}\\ \lambda \geq 0\\ Z_{h,\epsilon}(\lambda,\delta)\succeq 0\end{array}\right\} \ . \nonumber
\end{align}
\begin{lemma}\label{lem:single-delta}
    A stepsize pattern $h\in\mathbb{R}^t$ is $\epsilon\geq 0$-straightforward if for some $\Delta\in (0,1/2]$, $\mathcal{R}_{h,\epsilon,\delta}$ is nonempty for all $\delta\in[0,\Delta]$. Straightforwardness of $h$ is implied by each $\mathcal{R}_{h,0,\delta}$ being nonempty.
\end{lemma}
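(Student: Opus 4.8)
The plan is to show that every $\lambda \in \mathcal{R}_{h,\epsilon,\delta}$ furnishes a feasible point of the dual SDP~\eqref{eq:dual-form}, so that weak duality delivers the straightforwardness bound. First I would invoke the reduction to $L=D=1$ established just above the lemma, which lets me work with $p_{1,1}$ and the target bound $p_{1,1}(\delta)\le \delta - \sum_{i=0}^{t-1}(h_i-\epsilon)\delta^2$; by the scaling identity $p_{L,D}(\delta)=LD^2 p_{1,1}(\delta/LD^2)$ this upgrades immediately to the bound for arbitrary $L,D$ with the same straightforwardness parameter.

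Fix $\delta$ with $\mathcal{R}_{h,\epsilon,\delta}\ne\emptyset$ and choose $\lambda$ in it. I would set $v:=\sum_{i=0}^{t-1}(h_i+\epsilon)\delta^2$, $w:=1-2\sum_{i=0}^{t-1}h_i\delta$, and $Z:=Z_{h,\epsilon}(\lambda,\delta)$, and then verify that $(\lambda,v,w,Z)$ satisfies every constraint of~\eqref{eq:dual-form} at $L=D=1$. The linear equality is exactly the defining equality of $\mathcal{R}_{h,\epsilon,\delta}$ once one recognizes $a_{\star,t}=\mathbf{f}_t$, $a_{\star,0}=\mathbf{f}_0$, and $w=1-2\sum_{i=0}^{t-1}h_i\delta$; the matrix equation holds by the very definition of $Z_{h,\epsilon}(\lambda,\delta)$, in which the $v B_{0,\star}$ term is already built in; the conditions $Z\succeq 0$ and $\lambda\ge 0$ come directly from membership in $\mathcal{R}_{h,\epsilon,\delta}$; and $v\ge 0$ is automatic since the stepsizes and $\epsilon$ are nonnegative. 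Weak duality then gives $p_{1,1}(\delta)\le D^2 v + \delta w = v + \delta w$, and a one-line computation confirms $v+\delta w=\delta-\sum_{i=0}^{t-1}(h_i-\epsilon)\delta^2$, precisely the target.

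The one dual constraint that is not automatic is the sign condition $w\ge 0$, and this is where I expect the only real subtlety. Since $w=1-2\sum_{i=0}^{t-1}h_i\delta$, we have $w\ge 0$ exactly when $\delta\le 1/(2\sum_{i=0}^{t-1}h_i)$, which need not hold throughout $[0,\Delta]$; and when $w<0$ the bound $f_t\le w f_0 + v\,\tr G B_{0,\star}$ combined with $f_0\ge 0$ only yields $p_{1,1}(\delta)\le v$, which is strictly weaker than the target $v+\delta w$. Because straightforwardness asks only for the existence of \emph{some} parameter in $(0,1/2]$, I would simply pass to $\Delta':=\min\{\Delta,\ 1/(2\sum_{i=0}^{t-1}h_i)\}$: this lies in $(0,1/2]$, the interval $[0,\Delta']$ is contained in $[0,\Delta]$ so $\mathcal{R}_{h,\epsilon,\delta}$ remains nonempty there, and $w\ge 0$ now holds for every $\delta\in[0,\Delta']$. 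Running the dual-feasibility argument for each such $\delta$ certifies $\epsilon$-straightforwardness with parameter $\Delta'$. The second sentence of the lemma is the special case $\epsilon=0$ and requires no further work.
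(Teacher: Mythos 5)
Your proof is correct and takes essentially the same route as the paper: there is no separate proof of Lemma~\ref{lem:single-delta} in the text, and the intended argument is exactly the weak-duality discussion preceding it in Section~\ref{subsec:certificate} --- plug $\lambda\in\mathcal{R}_{h,\epsilon,\delta}$ together with the fixed $v=\sum_{i=0}^{t-1}(h_i+\epsilon)\delta^2$, $w=1-2\sum_{i=0}^{t-1}h_i\delta$, and $Z=Z_{h,\epsilon}(\lambda,\delta)$ into the dual~\eqref{eq:dual-form}, observe $v+\delta w$ equals the target, and rescale via $p_{L,D}(\delta)=LD^2p_{1,1}(\delta/LD^2)$. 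Your additional observation that the dual constraint $w\geq 0$ can fail for $\delta$ near $1/2$ when $\sum_{i=0}^{t-1}h_i>1$, repaired by passing to $\Delta'=\min\{\Delta,\,1/(2\sum_{i=0}^{t-1}h_i)\}$, is a legitimate detail the paper leaves implicit, and it is harmless because straightforwardness only asks for the existence of \emph{some} parameter in $(0,1/2]$.
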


This lemma alone does not directly enable the computation of a convergence-proof certificate. One would need certificates of feasibility for the infinitely many sets given by each $\delta\in [0,\Delta]$. The following theorem shows that the existence of such solutions can be certified via a single feasible solution to yet another semidefinite program.
\begin{theorem} \label{thm:straightforward-certificate}
    A stepsize pattern $h\in\mathbb{R}^t$ is $\epsilon\geq 0 $-straightforward if for some $\Delta\in (0,1/2]$, $\mathcal{S}_{h,\epsilon,\Delta}$ is nonempty where
    $$ \mathcal{S}_{h,\epsilon,\Delta} = \left\{ (\lambda,\gamma)\in\mathbb{R}^{(t+2)\times (t+2)}\times\mathbb{R}^{(t+2)\times (t+2)} \mid \begin{array}{l}
    \sum_{i,j\in I_{t}^{\star}:i\neq j}\lambda_{i,j}a_{i,j}= a_{\star, t} - a_{\star, 0}\\
    \sum_{i,j\in I_{t}^{\star}:i\neq j}\gamma_{i,j}a_{i,j}= 2\sum_{i=0}^{t-1} h_i a_{\star, 0}\\
    m_h(\lambda)=0 \\
    \lambda \geq 0, \lambda + \Delta\gamma \geq 0\\
    \begin{bmatrix} \sum_{i=0}^{t-1} (h_i + \epsilon) & m_h(\gamma)^T \\ m_h(\gamma) & M_h(\lambda) \end{bmatrix}\succeq 0\\
    \begin{bmatrix} \sum_{i=0}^{t-1} (h_i + \epsilon) & m_h(\gamma)^T \\ m_h(\gamma) & M_h(\lambda+\Delta\gamma) \end{bmatrix}\succeq 0 
    \end{array}\right\} \ .$$
\end{theorem}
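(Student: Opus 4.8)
The plan is to reduce Theorem~\ref{thm:straightforward-certificate} to the preceding Lemma~\ref{lem:single-delta}: it suffices to show that nonemptiness of the single set $\mathcal{S}_{h,\epsilon,\Delta}$ implies $\mathcal{R}_{h,\epsilon,\delta}$ is nonempty for every $\delta\in[0,\Delta]$. The key structural observation is that $\mathcal{R}_{h,\epsilon,\delta}$ depends on $\delta$ almost affinely. The equality constraint $\sum_{i,j}\lambda_{i,j}a_{i,j} = a_{\star,t}-(1-2\sum_i h_i\delta)a_{\star,0}$ is affine in $(\lambda,\delta)$, and the only genuinely nonlinear feature is the quadratic $\delta^2$ appearing in the top-left entry of $Z_{h,\epsilon}(\lambda,\delta)$ via the $B_{0,\star}$ term. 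This suggests the following ansatz: given a witness $(\lambda,\gamma)\in\mathcal{S}_{h,\epsilon,\Delta}$, I would set the candidate multiplier for each $\delta\in[0,\Delta]$ to be the affine interpolant
\[
\lambda(\delta) := \lambda + \frac{\delta}{\Delta}\,\bigl((\lambda+\Delta\gamma)-\lambda\bigr) = \lambda + \delta\gamma \ ,
\]
so $\lambda(\delta)$ traces the segment from $\lambda$ (at $\delta=0$) to $\lambda+\Delta\gamma$ (at $\delta=\Delta$). This matches the hint in the proof-sketch section that one uses a linear multiplier policy $\lambda(\delta)=\lambda+\delta\gamma$.

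I would then verify that $\lambda(\delta)\in\mathcal{R}_{h,\epsilon,\delta}$ for each such $\delta$ by checking the three defining conditions in turn. First, the equality constraint: substituting $\lambda(\delta)=\lambda+\delta\gamma$, the left side becomes $\sum_{i,j}\lambda_{i,j}a_{i,j} + \delta\sum_{i,j}\gamma_{i,j}a_{i,j}$, which by the two equality constraints in $\mathcal{S}_{h,\epsilon,\Delta}$ equals $(a_{\star,t}-a_{\star,0}) + \delta\cdot 2\sum_i h_i a_{\star,0} = a_{\star,t} - (1-2\sum_i h_i\delta)a_{\star,0}$, exactly as required. Second, nonnegativity $\lambda(\delta)\geq 0$: since $\lambda(\delta)$ is a convex combination of $\lambda\geq 0$ and $\lambda+\Delta\gamma\geq 0$ (both guaranteed by $\mathcal{S}_{h,\epsilon,\Delta}$) with weights $1-\delta/\Delta$ and $\delta/\Delta$ for $\delta\in[0,\Delta]$, nonnegativity is preserved entrywise by convexity.

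The main obstacle is the third condition, the semidefiniteness $Z_{h,\epsilon}(\lambda(\delta),\delta)\succeq 0$. Using the block form~\eqref{eq:block-form} and linearity of $m_h,M_h$, the matrix is
\[
Z_{h,\epsilon}(\lambda(\delta),\delta) = \begin{bmatrix} \sum_{i=0}^{t-1}(h_i+\epsilon)\delta^2 & m_h(\lambda+\delta\gamma)^T \\ m_h(\lambda+\delta\gamma) & M_h(\lambda+\delta\gamma)\end{bmatrix} \ ,
\]
and since $m_h(\lambda)=0$ by the constraint in $\mathcal{S}_{h,\epsilon,\Delta}$, the off-diagonal block is $\delta\,m_h(\gamma)$; factoring $\delta$ out of the first row and column (for $\delta>0$) reduces the claim to showing that
\[
\widehat Z(\delta) := \begin{bmatrix} \sum_{i=0}^{t-1}(h_i+\epsilon) & m_h(\gamma)^T \\ m_h(\gamma) & \tfrac{1}{\delta}M_h(\lambda+\delta\gamma)\end{bmatrix}\succeq 0 \ .
\]
Here I expect the crux to be a joint-convexity/interpolation argument: the two PSD matrices appearing in $\mathcal{S}_{h,\epsilon,\Delta}$ are precisely $\widehat Z$ evaluated with lower-right blocks $M_h(\lambda)$ and $M_h(\lambda+\Delta\gamma)$. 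I would argue that $\tfrac{1}{\delta}M_h(\lambda+\delta\gamma)=\tfrac{1}{\delta}M_h(\lambda)+M_h(\gamma)$, so that $\widehat Z(\delta)$ is a $\delta$-dependent affine combination of the two witnessed PSD matrices. Writing $\tfrac{1}{\delta} = \tfrac{1-\delta/\Delta}{\,0\cdot(\,\cdots)\,}$ requires care at $\delta=0$; the clean route is to note that $M_h(\lambda)\succeq 0$ alone (from the Schur complement of the first witness matrix together with $m_h(\lambda)=0$) makes the $\tfrac{1}{\delta}M_h(\lambda)$ term only \emph{help}, so $\widehat Z(\delta)\succeq \widehat Z(\Delta)$ for $\delta\leq\Delta$, and the second witness matrix gives $\widehat Z(\Delta)\succeq 0$. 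The $\delta=0$ boundary case is handled directly by the first witness matrix. Once these three checks are complete, Lemma~\ref{lem:single-delta} delivers $\epsilon$-straightforwardness and the theorem follows.
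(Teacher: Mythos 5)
Your overall strategy is the paper's: reduce to Lemma~\ref{lem:single-delta} via the affine ansatz $\lambda^{(\delta)}=\lambda+\delta\gamma$, and your verifications of the equality constraint and of entrywise nonnegativity are exactly right. However, the positive semidefiniteness step contains a genuine error. Rescaling the first row and column of
$Z_{h,\epsilon}(\lambda^{(\delta)},\delta)$ by $1/\delta$ is a congruence by $\diag(1/\delta,I)$, which leaves the lower-right block \emph{untouched}: the correct reduced matrix is
$$ P(\delta) := \begin{bmatrix} \sum_{i}(h_i+\epsilon) & m_h(\gamma)^T \\ m_h(\gamma) & M_h(\lambda) + \delta M_h(\gamma) \end{bmatrix}, $$
not your $\widehat Z(\delta)$ with lower-right block $\tfrac{1}{\delta}M_h(\lambda+\delta\gamma)=\tfrac{1}{\delta}M_h(\lambda)+M_h(\gamma)$. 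Your matrix is not congruent to $Z_{h,\epsilon}(\lambda^{(\delta)},\delta)$, and its PSD-ness is strictly weaker than what is required: by Schur complements, $\widehat Z(\delta)\succeq 0$ asks for $M_h(\lambda)+\delta M_h(\gamma)\succeq \tfrac{\delta}{c}\,m_h(\gamma)m_h(\gamma)^T$ with $c=\sum_i(h_i+\epsilon)$, whereas the required condition is $M_h(\lambda)+\delta M_h(\gamma)\succeq \tfrac{1}{c}\,m_h(\gamma)m_h(\gamma)^T$, which is stronger since $\delta\leq\Delta\leq 1/2<1$. So establishing $\widehat Z(\delta)\succeq 0$ does not close the argument. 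Relatedly, your claim that the two witness matrices in $\mathcal{S}_{h,\epsilon,\Delta}$ are $\widehat Z$ at the endpoints is inconsistent with your own definition ($\widehat Z(\Delta)$ has lower-right block $\tfrac{1}{\Delta}M_h(\lambda+\Delta\gamma)$, not $M_h(\lambda+\Delta\gamma)$).

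The fix is simpler than your monotonicity detour and is what the paper does: $P(\delta)$ is \emph{affine} in $\delta$, and $P(0)$, $P(\Delta)$ are exactly the two PSD matrices listed in the definition of $\mathcal{S}_{h,\epsilon,\Delta}$; hence $P(\delta)=(1-\delta/\Delta)P(0)+(\delta/\Delta)P(\Delta)\succeq 0$ for all $\delta\in[0,\Delta]$ by convexity of the PSD cone, and the $\delta=0$ case of the original condition holds since $M_h(\lambda)\succeq 0$. Note also that your Loewner-monotonicity idea cannot be transplanted onto the correct matrix $P(\delta)$, since that would require a sign condition on $M_h(\gamma)$ that is not guaranteed; the convexity argument is the one that works.
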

\begin{proof}
    Let $(\lambda,\gamma) \in \mathcal{S}_{h,\epsilon,\Delta}$. We prove this by showing $\lambda^{(\delta)}:=\lambda+\delta\gamma \in \mathcal{R}_{h,\epsilon,\delta}$ for every $\delta\in[0,\Delta]$ by Lemma~\ref{lem:single-delta}. This amounts to verifying the three conditions defining $\mathcal{R}_{h,\epsilon,\delta}$ for each $\lambda^{(\delta)}$. 

    First, we check $\sum_{i,j\in I_{t}^{\star}:i\neq j}\lambda^{(\delta)}_{i,j}a_{i,j}= a_{\star, t} - \left(1 - 2\sum_{i=0}^{t-1} h_i\delta\right)a_{\star, 0}$. The first equality defining $\mathcal{S}_{h,\epsilon,\Delta}$ ensures this for $\lambda^{(0)}=\lambda$. Adding $\delta$ times the second equality defining $\mathcal{S}_{h,\epsilon,\Delta}$ establishes the equality for every $\lambda^{(\delta)}$  as $\sum_{i,j\in I_{t}^{\star}:i\neq j}(\lambda^{(\delta)}_{i,j}+\delta\gamma_{i,j})a_{i,j}= a_{\star, t} - \left(1 - 2\sum_{i=0}^{t-1} h_i\delta\right)a_{\star, 0}$.
    Second, we check nonnegativity $\lambda^{(\delta)}\geq 0$. This follows by noting $\lambda^{(\delta)}$ is a convex combination of $\lambda$ and $\lambda+\Delta\gamma$, which are nonnegative by construction.    
    Finally, we check the nonlinear (but nearly linear) condition $Z_{h,\epsilon}(\lambda^{(\delta)},\delta)\succeq 0$. We consider the block-form~\eqref{eq:block-form} of this semidefinite inequality, seeking
    $$ Z_{h,\epsilon}(\lambda^{(\delta)},\delta) = \begin{bmatrix} \sum_{i=0}^{t-1} (h_i + \epsilon)\delta^2 & m_h(\lambda^{(\delta)})^T \\ m_h(\lambda^{(\delta)})) & M_h(\lambda^{(\delta)}) \end{bmatrix} \succeq 0 \ . $$
    Since $m_h(\lambda)=0$, using the linearity of $m_h$ and $M_h$, the above can be expanded to equal
    $$\begin{bmatrix} \sum_{i=0}^{t-1} (h_i + \epsilon)\delta^2 & \delta m_h(\gamma)^T \\ \delta m_h(\gamma) & M_h(\lambda) + \delta M_h(\gamma) \end{bmatrix} \succeq 0\ .
    $$
    Rescaling the first row and column by $1/\delta$ gives an equivalent condition, which is now linear in $\delta$,
    $$ \begin{bmatrix} \sum_{i=0}^{t-1} (h_i + \epsilon) & m_h(\gamma)^T \\ m_h(\gamma) & M_h(\lambda) + \delta M_h(\gamma) \end{bmatrix} \succeq 0\ .
    $$
    When $\delta=0$ or $\Delta$, this condition is explicitly ensured by the definition of $\mathcal{S}_{h,\epsilon,\Delta}$. Then the linearity and convexity of this condition imply it holds for all intermediate $\lambda^{(\delta)}$, completing the proof.
\end{proof}
{\color{blue} Note computing a member of $\mathcal{S}_{h,\epsilon,\Delta}$ does not correspond to solving a particular performance estimation problem. Instead, each member provides a ``line segment'' of solutions to a series of the performance estimation problems $p_{L,D}(\delta)$ for an interval of possible $\delta$ values.}

\subsection{Certificates of Straightforwardness Proving Guarantees in Table~\ref{tab:patterns-and-rates}}
To prove a given pattern $h$ converges at rate $LD^2/\mathrm{avg}(h)T$, we only need to show some $\mathcal{S}_{h,0,\Delta}$ is nonempty. The most natural path is to provide an exact member of this set. For all of the stepsizes in Table~\ref{tab:patterns-and-rates}, it was relatively easy to find a feasible solution $\mathcal{S}_{h,0,\Delta}$ in floating point arithmetic via an interior point method. However, exactly identifying a member of $\mathcal{S}_{h,0,\Delta}$ from this can still be hard.

First, we prove the claimed guarantees for the $t=2$ and $t=3$ stepsize patterns of Table~\ref{tab:patterns-and-rates} by presenting exact members of $\mathcal{S}_{h,0,\Delta}$. Then, to handle larger values of $t$, we present a simple rounding approach able to produce members of $\mathcal{S}_{h,\epsilon,\Delta}$, {\color{blue} often with $\epsilon$ around the accuracy of our SDP solves $\approx 10^{-9}$}. This approach produced rational-valued certificates proving the rest of the claimed convergence guarantees in Table~\ref{tab:patterns-and-rates}.

The exact rational arithmetic verifying the correctness of all certificates $(\lambda,\gamma)$ was done in \texttt{Mathematica 13.0.1.0}. Note that the entries in these certificates for $t\geq 7$ are entirely computer-generated and lack real human insight. As an example for reference, the certificate for $t=7$ is included in the appendix. Larger certificates are impractical to include here. For example, our $t=127$ guarantee is certificate $(\lambda,\gamma)$ has $32640$ nonzero entries. Certificates for every pattern in Table~\ref{tab:patterns-and-rates} and exact verifying computations are available at~\url{github.com/bgrimmer/LongStepCertificates}.

\begin{theorem}\label{thm:two-step-rate}
    For any $\eta \in (0,3)$, the stepsize pattern $h=(3-\eta,1.5)$ is straightforward. Hence gradient descent~\eqref{eq:pattern-GD} alternating between these two stepsizes has every even $T$ satisfy $$ f(x_T)-f(x_\star) \leq \frac{LD^2}{(2.25-\eta/2)\times T} + O(1/T^2) \ .$$
\end{theorem}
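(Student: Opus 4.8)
The convergence rate is an immediate consequence of straightforwardness, so the whole substance of the theorem lies in that single structural claim. Indeed, with $\epsilon = 0$ the pattern $h = (3-\eta, 1.5)$ has $\mathrm{avg}(h) = ((3-\eta) + 1.5)/2 = 2.25 - \eta/2$, and Theorem~\ref{thm:straightforward-rates} applied with $\epsilon = 0$ turns any proof of straightforwardness directly into the bound $f(x_T) - f(x_\star) \leq \frac{LD^2}{(2.25 - \eta/2)T} + O(1/T^2)$ for $T = 2s$. Hence the plan is to reduce everything to establishing that $h$ is straightforward.

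To do that I would invoke Theorem~\ref{thm:straightforward-certificate}: it suffices to exhibit, for some $\Delta \in (0, 1/2]$, an explicit pair $(\lambda, \gamma) \in \mathcal{S}_{h,0,\Delta}$. The case $t = 2$ is small enough to treat entirely by hand. The index set is $I_2^\star = \{\star, 0, 1, 2\}$, so $\lambda$ and $\gamma$ carry only twelve off-diagonal entries each, and the linear maps $m_h(\cdot) \in \mathbb{R}^3$, $M_h(\cdot) \in \mathbb{R}^{3\times 3}$ yield $4\times 4$ block matrices. I would write down a sparse ansatz for $(\lambda, \gamma)$ as explicit rational functions of $\eta$, supported on the interpolation inequalities that are active in the one-dimensional piecewise-linear worst case, namely the comparisons among the consecutive iterates $x_0, x_1, x_2$ and between each of them and $x_\star$.

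The verification then splits into three parts. First, the two linear flow equalities $\sum \lambda_{i,j} a_{i,j} = a_{\star, 2} - a_{\star, 0}$ and $\sum \gamma_{i,j} a_{i,j} = 2(h_0 + h_1)\, a_{\star, 0}$, together with $m_h(\lambda) = 0$, are linear systems that a sparse ansatz reduces to a handful of scalar equations pinning down the free multipliers. Second, the nonnegativity requirements $\lambda \geq 0$ and $\lambda + \Delta\gamma \geq 0$ constrain the admissible $\Delta$; I expect $\Delta$ to be chosen as a function of $\eta$ on the order of $1/\sum_i h_i = 1/(4.5 - \eta)$ and capped at $1/2$, mirroring the threshold $\delta \leq LD^2/\sum_i h_i$ below which the one-dimensional example stays on its slope. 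Third, I must check the two $4 \times 4$ semidefiniteness conditions of $\mathcal{S}_{h,0,\Delta}$, which correspond to the endpoints $\delta = 0$ and $\delta = \Delta$; by the convexity argument inside the proof of Theorem~\ref{thm:straightforward-certificate}, all intermediate $\delta \in [0,\Delta]$ are then handled automatically.

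The main obstacle is the PSD verification, since the candidate matrices have entries that are rational functions of the free parameter $\eta$ ranging over all of $(0,3)$, so no single numerical eigenvalue check can close the argument. I would reduce each semidefinite condition to the nonnegativity of its leading principal minors, or equivalently to an explicit Schur-complement factorization, and then prove that each resulting polynomial inequality in $\eta$ holds throughout $(0,3)$. Because $t = 2$ is small, these minors should factor cleanly enough to certify by hand for the entire parameter range; the same endpoint-plus-convexity scheme, executed through the computer-generated exact-arithmetic certificates guaranteed by Theorem~\ref{thm:straightforward-certificate}, is precisely what underlies the longer patterns recorded in Table~\ref{tab:patterns-and-rates}.
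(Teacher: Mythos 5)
Your high-level plan is the same as the paper's: reduce the theorem to exhibiting an explicit pair $(\lambda,\gamma)\in\mathcal{S}_{h,0,\Delta}$, invoke Theorem~\ref{thm:straightforward-certificate} to get straightforwardness, and then Theorem~\ref{thm:straightforward-rates} with $\epsilon=0$ and $\mathrm{avg}(h)=2.25-\eta/2$ to get the rate. That reduction is correct and matches the paper. The gap is that the reduction is essentially all you have written down: the entire substance of the paper's proof is the concrete certificate, namely $\lambda$ supported on the three entries $\lambda_{0,1}=\lambda_{0,2}=\lambda_{1,2}=\tfrac12$ and $\gamma$ supported on the first row with entries $3-\eta$, $\tfrac{6-\eta}{2}$, $\tfrac{6-\eta}{2}$ and a compensating $-\tfrac{6-\eta}{2}$ in the second row, together with the verification that these satisfy all six conditions of $\mathcal{S}_{h,0,\Delta}$ for $\Delta\leq 1/(6-\eta)$ small enough. ``I would write down a sparse ansatz'' does not discharge this; without the explicit multipliers there is no proof.

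There is also a concrete technical flaw in your proposed verification of the two semidefiniteness conditions. You suggest reducing each to the nonnegativity of its \emph{leading} principal minors, but that criterion certifies positive semidefiniteness only for nonsingular (positive definite) matrices; for PSD one needs \emph{all} principal minors nonnegative. This matters here because the certificate matrices are necessarily singular: the paper's first block matrix has exactly two zero eigenvalues (its last two rows coincide), so every leading principal minor of order $\geq 3$ vanishes and the leading-minor test would pass for matrices that are not PSD. The paper instead exploits that the matrix entries are affine in $\eta$, so PSD-ness on all of $(0,3)$ follows from checking the two endpoints $\eta=0,3$ by convexity of the PSD cone; and for the second condition it uses a perturbation argument, checking that the $O(\Delta)$ correction is PSD on the two-dimensional kernel of the base matrix, whence PSD-ness survives for $\Delta$ small enough. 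Your Schur-complement alternative (pivoting on the strictly positive $(1,1)$ entry) could be made to work, but the added coupling you introduce by letting $\Delta$ depend on $\eta$ inside the second matrix would make those polynomial inequalities two-parameter and considerably messier than the paper's endpoint-plus-kernel argument.
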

\begin{proof}
    For any $\eta\in(0,3)$, consider the selection of $(\lambda,\gamma)$ given by
\begin{align*}
     \lambda =
\begin{bmatrix}
0 & 0 & 0 & 0  \\
0 & 0 & \frac{1}{2} & \frac{1}{2}  \\
0 & 0 & 0 & \frac{1}{2}  \\
0 & 0 & 0 & 0  
\end{bmatrix} \ , \qquad
    \gamma =
\begin{bmatrix}
0 & 3-\eta & \frac{6-\eta}{2} & \frac{6-\eta}{2} \\
0 & 0 & \frac{-(6-\eta)}{2} & \frac{-(6-\eta)}{2} \\
0 & 0 & 0 & 0 \\
0 & 0 & 0 & 0
\end{bmatrix} \ . 
\end{align*}
It suffices to show for some $\Delta\in (0,1/2]$, $(\lambda,\gamma)\in \mathcal{S}_{h,0,\Delta}$. One can easily verify the needed equalities and nonnegativities hold for all $0 \leq \Delta\leq 1/(6-\eta)$. The first positive semidefiniteness condition of $(\lambda,\gamma)\in \mathcal{S}_{h,0,\Delta}$ amounts to checking every $\eta\in (0,3)$ has
$$ \begin{bmatrix} \sum_{i=0}^{t-1} (h_i + \epsilon) & m_h(\gamma)^T \\ m_h(\gamma) & M_h(\lambda) \end{bmatrix} =
\begin{bmatrix}
\frac{9-2\eta}{2} & \frac{-(3-\eta)}{2} & \frac{-(6-\eta)}{4} & \frac{-(6-\eta)}{4}  \\
\frac{-(3-\eta)}{2} & \frac{1}{2} & \frac{2-\eta}{4} & \frac{2-\eta}{4}  \\
\frac{-(6-\eta)}{4} & \frac{2-\eta}{4} & \frac{1}{2} & \frac{1}{2}  \\
\frac{-(6-\eta)}{4} & \frac{2-\eta}{4} & \frac{1}{2} & \frac{1}{2} 
\end{bmatrix} \succeq 0  \ .$$
Since this convex condition is linear in $\eta$, it suffices to check it at $\eta=0$ and $\eta=3$. Moreover, for any $\eta\in(0,3)$, note this matrix has exactly two zero eigenvalues with associated eigenvectors spanning $(1/2,1/2,1,0)$ and $(1/2,1/2,0,1)$. The second positive semidefiniteness condition amounts to checking an update to this matrix of size $\Delta$ remains positive semidefinite, namely $\begin{bmatrix} \sum_{i=0}^{t-1} (h_i + \epsilon) & m_h(\gamma)^T \\ m_h(\gamma) & M_h(\lambda+\Delta\gamma) \end{bmatrix} $ which equals
$$
\begin{bmatrix}
\frac{9-2\eta}{2} & \frac{-(3-\eta)}{2} & \frac{-(6-\eta)}{4} & \frac{-(6-\eta)}{4}  \\
\frac{-(3-\eta)}{2} & \frac{1}{2} & \frac{2-\eta}{4} & \frac{2-\eta}{4}  \\
\frac{-(6-\eta)}{4} & \frac{2-\eta}{4} & \frac{1}{2} & \frac{1}{2}  \\
\frac{-(6-\eta)}{4} & \frac{2-\eta}{4} & \frac{1}{2} & \frac{1}{2} 
\end{bmatrix} + \begin{bmatrix}
0 & 0 & 0 & 0  \\
0 & \frac{-3}{2} & \frac{6-\eta}{4} & \frac{6-\eta}{4}  \\
0 & \frac{6-\eta}{4} & 0 & 0  \\
0 & \frac{6-\eta}{4} & 0 & 0
\end{bmatrix}\Delta
$$
must be positive semidefinite.
One can check this added matrix term is positive semidefinite on the subspace spanned by $(1/2,1/2,1,0)$ and $(1/2,1/2,0,1)$ (again by checking when $\eta=0$ and $\eta=3$ and then using convexity). As a result, positive semidefiniteness is maintained for $\Delta$ small enough. Exact arithmetic verifying all of these claims are given in the associated \texttt{Mathematica} notebook.
Hence $(\lambda,\gamma)\in \mathcal{S}_{h,0,\Delta}$, proving the main claim by Theorem~\ref{thm:straightforward-certificate} and the claimed convergence guarantee by Theorem~\ref{thm:straightforward-rates}.
\end{proof}
\begin{theorem}
    The stepsize pattern $h=(1.5,4.9,1.5)$ is straightforward. Hence gradient descent~\eqref{eq:pattern-GD} cycling through these three stepsizes has every $T=3s$ satisfy 
    $$ f(x_T)-f(x_\star) \leq \frac{LD^2}{2.63333... \times T} + O(1/T^2) \ .$$
\end{theorem}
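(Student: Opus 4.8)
The plan is to mirror the proof of Theorem~\ref{thm:two-step-rate} verbatim, reducing the claim to producing a single explicit feasible point of the semidefinite feasibility set $\mathcal{S}_{h,0,\Delta}$. By Theorem~\ref{thm:straightforward-certificate}, nonemptiness of $\mathcal{S}_{h,0,\Delta}$ for some $\Delta\in(0,1/2]$ certifies that $h=(1.5,4.9,1.5)$ is ($0$-)straightforward. Once straightforwardness is in hand, the convergence rate is immediate from Theorem~\ref{thm:straightforward-rates} with $\epsilon=0$: since $\mathrm{avg}(h)=(1.5+4.9+1.5)/3=7.9/3=2.63333\ldots$, the stated bound $LD^2/(2.63333\ldots\times T)+O(1/T^2)$ follows for every $T=3s$. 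Thus the entire content of the theorem lies in exhibiting the certificate.

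First I would fix the dimensions. Here $t=3$, so $I_t^\star=\{\star,0,1,2,3\}$ has $t+2=5$ elements, and the certificate consists of two matrices $\lambda,\gamma\in\mathbb{R}^{5\times 5}$ indexed by these points together with a choice of $\Delta$. To obtain good numerical values I would solve the dual SDP~\eqref{eq:dual-form} (with $v,w$ fixed as in Section~\ref{subsec:certificate}) by an interior-point method for the specific stepsizes $(1.5,4.9,1.5)$, then round the floating-point optimizer to nearby rationals to get candidate $(\lambda,\gamma)$. With explicit rational matrices in hand, I would verify the two linear equalities $\sum_{i\neq j}\lambda_{i,j}a_{i,j}=a_{\star,t}-a_{\star,0}$ and $\sum_{i\neq j}\gamma_{i,j}a_{i,j}=2\sum_i h_i\,a_{\star,0}=15.8\,a_{\star,0}$, the linear condition $m_h(\lambda)=0$, and the two nonnegativity requirements $\lambda\geq 0$ and $\lambda+\Delta\gamma\geq 0$; each of these is a finite check over rational entries.

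The remaining and genuinely decisive step is the pair of positive-semidefiniteness conditions. Unlike the $t=2$ case, where the pattern carried a free parameter $\eta$ and each block matrix could be shown $\succeq 0$ by checking the endpoints $\eta\in\{0,3\}$ and invoking convexity, here the stepsizes are fixed concrete numbers, so both block matrices
\[
\begin{bmatrix} \sum_{i=0}^{t-1} h_i & m_h(\gamma)^T \\ m_h(\gamma) & M_h(\lambda) \end{bmatrix}, \qquad \begin{bmatrix} \sum_{i=0}^{t-1} h_i & m_h(\gamma)^T \\ m_h(\gamma) & M_h(\lambda+\Delta\gamma) \end{bmatrix}
\]
are fully determined $5\times 5$ rational matrices once $(\lambda,\gamma,\Delta)$ are chosen. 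Verifying $\succeq 0$ then reduces to checking nonnegativity of their leading principal minors, or equivalently computing an exact rational $LDL^\top$/Cholesky factorization, which is a terminating symbolic computation.

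The main obstacle I anticipate is the rounding step: an exact solution of a semidefinite feasibility problem need not be expressible in small rationals, and naively rounding an interior-point solution can destroy either nonnegativity or a PSD condition, since the feasible region may be thin (especially the coupling of $\lambda+\Delta\gamma\geq 0$ with $\Delta\leq 1/2$). To manage this I would either select a certificate in the relative interior, leaving spectral and nonnegativity slack to absorb rounding error, or, following the paper's stated approach, permit a tiny $\epsilon$-relaxation and certify membership in $\mathcal{S}_{h,\epsilon,\Delta}$ with $\epsilon$ near machine precision. For $t=3$ the certificate is small enough that I expect an exact rational member of $\mathcal{S}_{h,0,\Delta}$ to be writable explicitly, with all equalities, nonnegativities, and the two minor computations verified symbolically in exact arithmetic (as the authors do in \texttt{Mathematica}).
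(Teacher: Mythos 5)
Your proposal follows exactly the paper's route: reduce the theorem to exhibiting a member of $\mathcal{S}_{h,0,\Delta}$, invoke Theorem~\ref{thm:straightforward-certificate} for straightforwardness and Theorem~\ref{thm:straightforward-rates} (with $\epsilon=0$ and $\mathrm{avg}(h)=7.9/3$) for the rate, and you correctly enumerate every condition that must be checked. The one thing missing is the thing that constitutes the paper's entire proof: the explicit certificate itself. The paper's proof is nothing more than two concrete $5\times 5$ rational matrices $(\lambda,\gamma)$ together with the choice $\Delta=10^{-4}$ (note: considerably smaller than the $1/2$ ceiling you flag as a potential source of tightness), with the equalities, nonnegativities, and the two positive-semidefiniteness checks deferred to an accompanying exact-arithmetic notebook. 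So as written your submission is a correct and faithful proof schema rather than a proof; it would become the paper's proof verbatim once you carry out the interior-point solve and rounding you describe and display the resulting $(\lambda,\gamma)$. Your anticipated obstacle (rounding destroying feasibility) is real in general but turns out not to bite here: unlike the $t\geq 7$ cases, the paper does obtain an exact member of $\mathcal{S}_{h,0,\Delta}$ with short decimal entries for $t=3$, so no $\epsilon$-relaxation is needed, exactly as you predicted.
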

\begin{proof}
    This result is certified with $\Delta=10^{-4}$ by the following exact values for $(\lambda,\gamma)\in \mathcal{S}_{h,0,\Delta}$ of
\begin{align*}
     \lambda =
\begin{bmatrix}
0 & 0 & 0 & 0 & 0 \\
0 & 0 & 1.95 & 0.003 & 0.007 \\
0 & 0.95 & 0 & 0.5 & 0.5 \\
0 & 0.006 & 0 & 0 & 0.51 \\
0 & 0.004 & 0 & 0.013 & 0
\end{bmatrix}, \
    \gamma  =
\begin{bmatrix}
0 & 0.005 & 7.825 & 3.9497 & 4.0203 \\
0 & 0 & -5.24 & -10.555 & 0 \\
0 & 0 & 0 & 7.9 & -5.315 \\
0 & 0 & 0 & 0 & 1.2947 \\
0 & 0 & 0 & 0 & 0
\end{bmatrix}.
\end{align*}
\end{proof}
\noindent Based on numerical exploration, we conjecture that patterns of the form $(3-\eta,1.5)$ are the longest straightforward patterns of length two and $(1.5,5-\eta,1.5)$ are the longest of length three.

For larger settings $t\geq 7$, determining an exact member of $\mathcal{S}_{h,0,\Delta}$ proved difficult. So we resort to a fully automated construction of a convergence guarantee certificate by first numerically computing an approximate member of $\mathcal{S}_{h,0,\Delta}$ (via an interior point method) and then rounding to a nearby rational-valued exact member of $\mathcal{S}_{h,\epsilon,\Delta}$ for some small $\epsilon$. In light of our Theorem~\ref{thm:straightforward-rates}, such rounding only weakens the resulting guarantee's coefficient from $\mathrm{avg}(h)$ to $\mathrm{avg}(h)-\epsilon$.\\[-8pt]

\noindent {\bf Computer Generation of Convergence Proof Certificates.} {\it Given $h$ and $\Delta$, \\
(i) Numerically compute some $(\tilde\lambda,\tilde\gamma)$ approximately in $\mathcal{S}_{h,0,\Delta}$, \\
(ii) Compute rational $(\hat\lambda,\hat\gamma)$ near $(\tilde\lambda,\tilde\gamma)$ exactly satisfying the three needed equalities,\\
(iii) Check in exact arithmetic nonnegativity and positive definiteness of $M_h(\hat\lambda)$ and $M_h(\hat\lambda+\Delta\hat\gamma)$,\\
(iv) If so, $(\hat\lambda,\hat\gamma)\in \mathcal{S}_{h,\epsilon,\Delta}$, certifying $\frac{LD^2}{(\mathrm{avg}(h)-\epsilon)T} + O(1/T^2)$ convergence, for}
$$ \epsilon = \frac{\max\{m_h(\hat\gamma)^TM_h(\hat\lambda)^{-1}m_h(\hat\gamma),\ m_h(\hat\gamma)^TM_h(\hat\lambda+\Delta\hat\gamma)^{-1}m_h(\hat\gamma)\}}{t}-\mathrm{avg}(h) \ .$$
The above value of $\epsilon$ is the smallest value with $(\hat\lambda,\hat\gamma)\in \mathcal{S}_{h,\epsilon,\Delta}$, since by considering their Schur complements, the two needed positive semidefinite conditions hold if and only if
$$ \sum_{i=0}^{t-1}(h_i+\epsilon) - m_h(\hat\gamma)^TM_h(\hat\lambda)^{-1}m_h(\hat\gamma) \geq 0 \ \text{ and } \sum_{i=0}^{t-1}(h_i+\epsilon) - m_h(\hat\gamma)^TM_h(\hat\lambda+\Delta\hat\gamma)^{-1}m_h(\hat\gamma) \geq 0\ .$$

\begin{theorem}
    The stepsize patterns of lengths $t\in \{7,15,31,63,127\}$ in Table~\ref{tab:patterns-and-rates} are all $\epsilon$-straightforward for $\epsilon\in \{10^{-9},10^{-9},10^{-11},10^{-3},10^{-4}\}$ with $\Delta\in\{10^{-5},10^{-6},10^{-8},10^{-7},10^{-8}\}$. Hence the convergence guarantees claimed in Table~\ref{tab:patterns-and-rates} hold for each corresponding ``long step'' gradient descent method.
\end{theorem}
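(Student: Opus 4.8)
The plan is to reduce everything to Theorem~\ref{thm:straightforward-certificate}: to prove a given pattern $h$ is $\epsilon$-straightforward it suffices to exhibit, for the claimed $\Delta$, a single pair $(\lambda,\gamma)\in\mathcal{S}_{h,\epsilon,\Delta}$. For the short patterns ($t=2,3$) one can write down such a certificate explicitly and check it by hand, but for $t\in\{7,15,31,63,127\}$ I would instead produce the certificate by the computer-generation procedure above and verify it in exact arithmetic. The theorem is then essentially a verification statement: each claimed $(\epsilon,\Delta)$ is witnessed by an explicit rational $(\hat\lambda,\hat\gamma)\in\mathcal{S}_{h,\epsilon,\Delta}$, stored in the accompanying repository and too large to reproduce in full (the $t=127$ witness alone has $32640$ nonzero entries).

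Concretely, for each pattern I would first solve the SDP feasibility problem defining $\mathcal{S}_{h,0,\Delta}$ numerically via an interior-point method, obtaining a floating-point $(\tilde\lambda,\tilde\gamma)$. Because the three equality constraints of $\mathcal{S}_{h,\epsilon,\Delta}$ (the two linear relations among the $a_{i,j}$ and the condition $m_h(\lambda)=0$) are affine, I would round $(\tilde\lambda,\tilde\gamma)$ to a nearby rational point $(\hat\lambda,\hat\gamma)$ satisfying them exactly by projecting onto the constraint subspace in rational arithmetic. The componentwise conditions $\hat\lambda\geq 0$ and $\hat\lambda+\Delta\hat\gamma\geq 0$ are then immediate to check exactly.

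The crux is the pair of semidefiniteness conditions. Verifying in exact arithmetic that $M_h(\hat\lambda)$ and $M_h(\hat\lambda+\Delta\hat\gamma)$ are positive definite, I would take Schur complements with respect to the leading block, so that each block matrix in $\mathcal{S}_{h,\epsilon,\Delta}$ is positive semidefinite if and only if its scalar Schur complement $\sum_{i=0}^{t-1}(h_i+\epsilon)-m_h(\hat\gamma)^TM_h(\cdot)^{-1}m_h(\hat\gamma)$ is nonnegative. The smallest $\epsilon$ for which both hold is exactly the value in the certificate-generation box, giving $(\hat\lambda,\hat\gamma)\in\mathcal{S}_{h,\epsilon,\Delta}$ for the reported $\epsilon$. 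Theorem~\ref{thm:straightforward-certificate} then delivers $\epsilon$-straightforwardness, and Theorem~\ref{thm:straightforward-rates} converts this into the tabulated $LD^2/((\mathrm{avg}(h)-\epsilon)T)+O(1/T^2)$ rate.

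The main obstacle is not any single inequality but keeping the rounding harmless: the rational certificate must satisfy the equalities exactly while perturbing the leading block little enough that the Schur complements remain nonnegative with only a tiny $\epsilon$, since inflating $\epsilon$ degrades the guarantee from $\mathrm{avg}(h)$ to $\mathrm{avg}(h)-\epsilon$. This is delicate precisely because the entries are entirely machine-generated and the matrices are large, so the exact positive-definiteness checks (and the exact inversions implicit in the Schur complements) are computationally heavy; it is what forces the larger $\epsilon$ (up to $10^{-3}$) for the longest patterns. All of these exact-arithmetic verifications would be carried out in \texttt{Mathematica}.
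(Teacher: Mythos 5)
Your proposal follows essentially the same route as the paper: the paper's proof of this theorem is simply a pointer to computer-generated rational certificates $(\hat\lambda,\hat\gamma)\in\mathcal{S}_{h,\epsilon,\Delta}$ verified in exact arithmetic, produced by exactly the numerical-solve--then--round procedure (with the Schur-complement characterization of the minimal $\epsilon$) that you describe, followed by invoking Theorems~\ref{thm:straightforward-certificate} and~\ref{thm:straightforward-rates}. No substantive differences.
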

\begin{proof}
    Certificates $(\hat\lambda,\hat\gamma)\in\mathcal{S}_{h,\epsilon,\Delta}$ (produced via the above procedure) with exact arithmetic validation are available at \url{github.com/bgrimmer/LongStepCertificates}.
\end{proof}
    \section{Future Directions}\label{sec:conclusion}
{\color{blue} Building on existing performance estimation ideas, we have demonstrated an analysis technique capable of proving convergence of gradient descent using nonconstant, long stepsize patterns, which gain in strength as longer patterns are considered. This runs contrary to widely held intuitions regarding constant stepsize selections and the importance of monotone objective decreases. Instead, we show that long-run performance can improve by periodically taking (very) long steps that may increase the objective value in the short term. We accomplish this via computer-generated proof certificates bounding the effect of many steps collectively by providing solutions for a sequence of related performance estimation problems.} We conclude by discussing a few possible future improvements on and shortcomings of this technique.\\[-8pt]

{\color{blue}
\noindent {\bf Possible Extension to More Families of Gradient Methods.}
    Often, analysis techniques for gradient descent and its accelerated variants extend rather directly to constrained minimization or minimizing composite objectives $f(x) + r(x)$ by utilizing projections and proximal operators. PEP techniques naturally extend to these settings. Drori~\cite[Theorem 2.7 and 2.9]{Drori2014} shows the tight convergence rate for projected gradient descent is strictly worse than the unconstrained setting, having rate $LD^2/4T$ instead of~\eqref{eq:distance-induction-style-result}. A similar (small) worsening of the optimal rate also holds for accelerated proximal gradient methods~\cite{jang2023computerassisted}. This difficulty appears to extend to our straightforward analysis technique. Figure~\ref{fig:straightforwards} shows the length two and three stepsize patterns that numerically satisfy a generalized notion of straightforwardness for gradient descent, projected gradient descent, and proximal gradient descent. The set of ``straightforward'' patterns for constrained is strictly smaller and composite yet strictly smaller. Consequently, strictly less aggressive stepsizes than  Table~\ref{tab:patterns-and-rates} would be required for future projected or proximal extensions.

    The PEP framework has previously been successfully employed in handling settings of inexact gradients~\cite{Klerk2020} and relatively smooth optimization (via Bregman divergences)~\cite{Dragomir2019OptimalCA}. Determining the degree to which utilizing periodic long steps can improve the convergence of inexact, Bregman, and stochastic variations of gradient descent also provides interesting future directions.

\begin{figure}
    \includegraphics[width=0.33\linewidth]{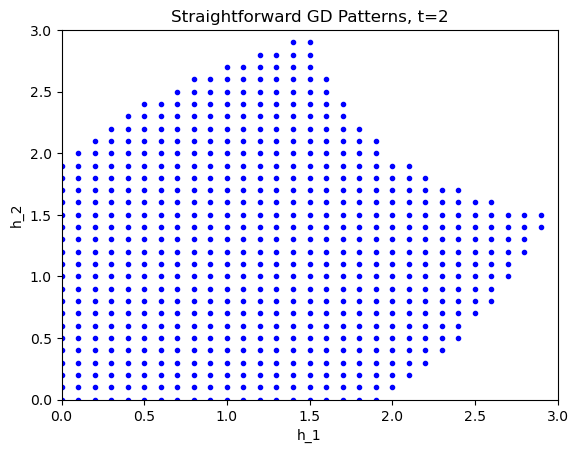}\includegraphics[width=0.33\linewidth]{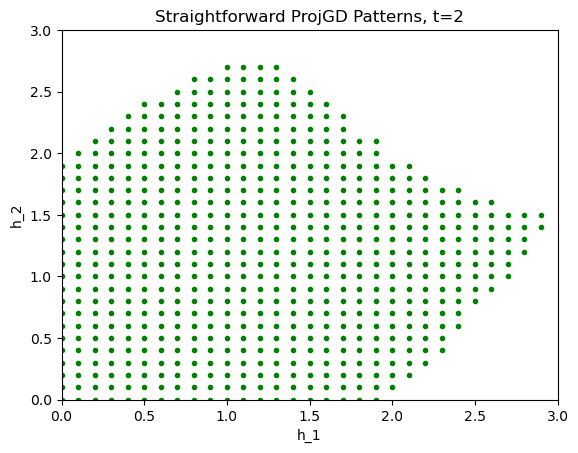}\includegraphics[width=0.33\linewidth]{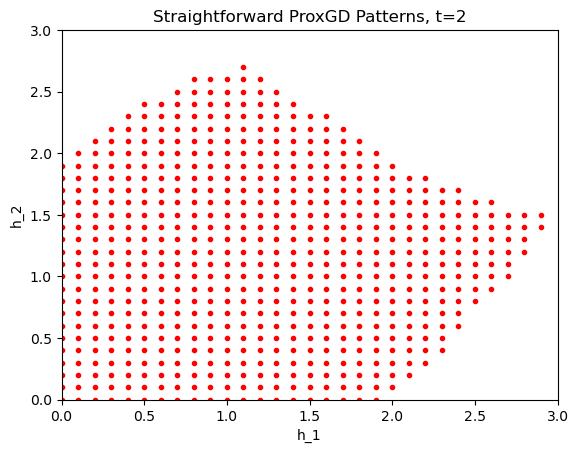}
    \includegraphics[width=0.33\linewidth]{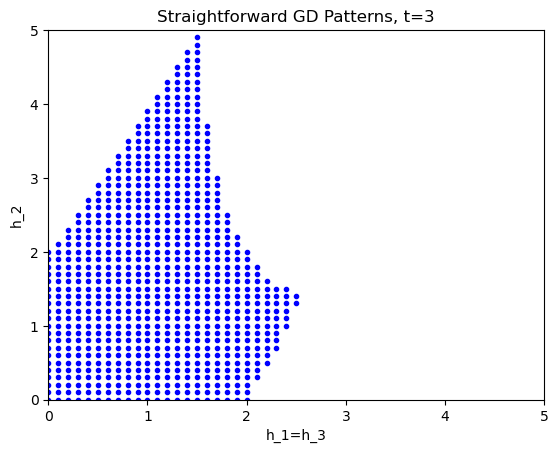}\includegraphics[width=0.33\linewidth]{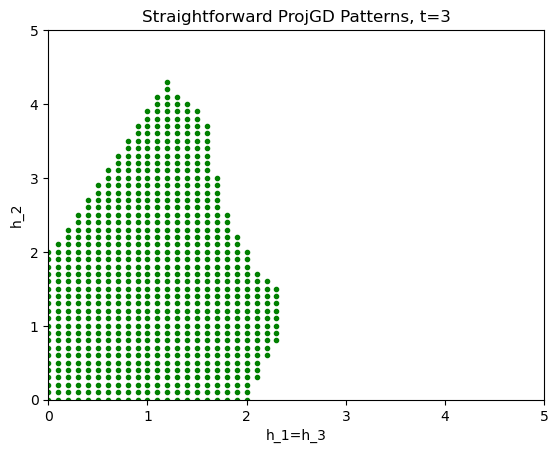}\includegraphics[width=0.33\linewidth]{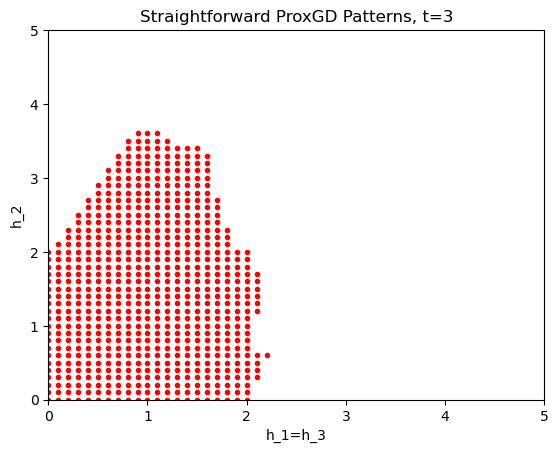}
    \caption{\color{blue} For unconstrained, constrained, and composite minimization, the stepsize patterns satisfying a natural generalization of straightforwardness are shown. Patterns $(h_1,h_2)$ of length two and symmetric patterns $(h_1,h_2,h_1)$ of length three were sampled at every $0.1$ increment. Straightforwardness was approximated by solving a performance estimation problem determining if $f(x_t) - p_\star$ (or $f(x_t) + r(x_t)- p_\star$ for composite problems) is always at most $f(x_0) - p_\star - \frac{\sum h_i}{LD^2} (f(x_0) - p_\star)^2$ (or $f(x_0) +r(x_0)- p_\star - \frac{\sum h_i}{LD^2} (f(x_0) +r(x_0)- p_\star)^2$) given the initial gap was at most $\Delta = 10^{-4}$.}\label{fig:straightforwards}
\end{figure}
}

\noindent {\bf Future Improvements in Algorithm Design.}
    The search for long, straightforward stepsize patterns $h$ is hard. The patterns presented in Table~\ref{tab:patterns-and-rates} resulted from substantial brute force searching. The task of maximizing $\mathrm{avg}(h)$ subject to $h$ being straightforward, although nonconvex, may be approachable using branch-and-bound techniques similar to those recently developed by Gupta et al.~\cite{gupta2023branch} and applied to a range of PEP parameter optimization problems. Such an approach may yield numerically, globally optimal $h$ for fixed length $t$. This may also generate insights into the general form of the longest straightforward stepsize patterns for each fixed $t$.

    One practical drawback of the method~\eqref{eq:pattern-GD} is the requirement that one knows $L$. Since our analysis in Theorem~\ref{thm:straightforward-rates} only relies on decreases in objective value after $t$ steps, backtracking linesearch schemes or other adaptive ideas may be applicable. For example, one could consider {\color{blue} an Armijo-type linesearching procedure, backtracking with an estimate of $\tilde L$ seeking $f(x_t) \leq f(x_0) - \frac{\sum h_i}{\tilde L}\|\nabla f(x_0)\|^2$. Since convexity ensures $\|\nabla f(x_0)\|^2 \geq \delta_0^2/D^2$, this condition guarantees the descent required by straightforwardness. Analysis and practical development of such ideas are beyond our scope.}\\[-8pt]

\noindent {\bf Future Improvements in Analysis Techniques.}
    Future works may improve our analysis by considering other Lyapunov functions. The distance to optimal and norm of the gradient were both used in~\cite[Chapter 8]{altschuler2018greed}. Our proofs are only concerned with the eventual decrease of the objective gap. The analysis of optimal accelerated and subgradient methods relies additionally on the decreasing distance to a minimizer or a decreasing combination thereof. Identifying better Lyapunovs and stepsize patterns guaranteed to eventually decrease them may lead to stronger guarantees. \\[-8pt]

\noindent {\bf Future Improvements in Computational Aspects.}
    We observed that numerically computed primal optimal solutions to~\eqref{eq:SDP-form} were rank-one for all considered straightforward patterns. This corresponds to the worst-case objective function being essentially one-dimensional. {\color{blue} This is in line with many prior PEP-based analyses finding one-dimensional Huber functions often attain the worst-case performance.} This property was not used herein but could likely be leveraged to enable customized solvers for evaluating $p_{L,D}(\delta)$ and checking membership of $\mathcal{S}_{h,0,\Delta}$. Such improvements in tractability for SDPs with rank-one solutions have been studied widely~\cite{Bellavia2019ARI,ding2021storageOptimal,Yurtsever2021scalableSDP,Souto2022operatorSDP,wang2023GTRS,ding2023,wang2022accelerated} and may enable the search for longer, provably faster straightforward stepsize patterns than shown here. 

    As another avenue of improvement, note that any certificates produced by using floating point arithmetic followed by a rounding step (as done here) will likely lose a small $\epsilon$ amount in the guarantee. The use of an algebraic solver, like SPECTRA~\cite{Henrion2016SPECTRAA}, could enable the automated production of exact certificates of straightforwardness as well as being able to certify when $\mathcal{S}_{h,0,\Delta}$ is empty.

    {\small
    \paragraph{Acknowledgements.} The author thanks Jason Altschuler for conversations at the Simons Insitute in Spring 2017 about the preliminary ideas subsequently developed in his excellent Master's thesis work, Fanghua Chen for preliminary discussions based on numerical observations that motivated this work, and Adrien Taylor and Axel B\"ohm for providing useful feedback on the initial presentation of this work. The openly released Performance Estimation Problem Branch-and-Bound software of~\cite{gupta2023branch} was especially helpful in early explorations and building intuitions.
    
    \bibliographystyle{unsrt}
    \bibliography{references}

\begin{thebibliography}{10}

\bibitem{drori2012PerformanceOF}
Yoel Drori and Marc Teboulle.
\newblock Performance of first-order methods for smooth convex minimization: a
  novel approach.
\newblock {\em Mathematical Programming}, 145:451--482, 2012.

\bibitem{taylor2017interpolation}
Adrien Taylor, Julien Hendrickx, and Fran\c{c}ois Glineur.
\newblock Smooth strongly convex interpolation and exact worst-case performance
  of first-order methods.
\newblock {\em Mathematical Programming}, 161:307–345, 2017.

\bibitem{taylor2017smooth}
Adrien Taylor, Julien Hendrickx, and Fran\c{c}ois Glineur.
\newblock Exact worst-case performance of first-order methods for composite
  convex optimization.
\newblock {\em SIAM Journal on Optimization}, 27(3):1283--1313, 2017.

\bibitem{altschuler2018greed}
Jason Altschuler.
\newblock {\em Greed, hedging, and acceleration in convex optimization}.
\newblock PhD thesis, Massachusetts Institute of Technology, 2018.

\bibitem{Klerk2016OnTW}
Etienne de~Klerk, François Glineur, and Adrien Taylor.
\newblock On the worst-case complexity of the gradient method with exact line
  search for smooth strongly convex functions.
\newblock {\em Optimization Letters}, 11:1185--1199, 2016.

\bibitem{taylor2018Lyapunov}
Adrien Taylor, Bryan Van~Scoy, and Laurent Lessard.
\newblock {L}yapunov functions for first-order methods: Tight automated
  convergence guarantees.
\newblock In {\em Proceedings of the 35th International Conference on Machine
  Learning}, volume~80, pages 4897--4906. PMLR, 10--15 Jul 2018.

\bibitem{Dragomir2019OptimalCA}
Radu-Alexandru Dragomir, Adrien Taylor, Alexandre d’Aspremont, and
  J{\'e}r{\^o}me Bolte.
\newblock Optimal complexity and certification of bregman first-order methods.
\newblock {\em Mathematical Programming}, 194:41 -- 83, 2019.

\bibitem{Taylor2019StochasticFM}
Adrien Taylor and Francis Bach.
\newblock Stochastic first-order methods: non-asymptotic and computer-aided
  analyses via potential functions.
\newblock {\em ArXiv}, abs/1902.00947, 2019.

\bibitem{Lieder2020OnTC}
Felix Lieder.
\newblock On the convergence rate of the halpern-iteration.
\newblock {\em Optimization Letters}, pages 1--14, 2020.

\bibitem{Ryu2020tightSplittingContraction}
Ernest Ryu, Adrien Taylor, Carolina Bergeling, and Pontus Giselsson.
\newblock Operator splitting performance estimation: Tight contraction factors
  and optimal parameter selection.
\newblock {\em SIAM Journal on Optimization}, 30(3):2251--2271, 2020.

\bibitem{Gu2020tightPPM}
Guoyong Gu and Junfeng Yang.
\newblock Tight sublinear convergence rate of the proximal point algorithm for
  maximal monotone inclusion problems.
\newblock {\em SIAM Journal on Optimization}, 30(3):1905--1921, 2020.

\bibitem{Barre2020PrincipledAA}
Mathieu Barr\'e, Adrien Taylor, and Francis Bach.
\newblock Principled analyses and design of first-order methods with inexact
  proximal operators.
\newblock {\em Mathematical Programming}, 2022.

\bibitem{gupta2023branch}
Shuvomoy~Das Gupta, Bart P.G.~Van Parys, and Ernest Ryu.
\newblock Branch-and-bound performance estimation programming: A unified
  methodology for constructing optimal optimization methods.
\newblock {\em Mathematical Programming}, 2023.

\bibitem{Kim2016optimal}
Donghwan Kim and Jeffrey~A. Fessler.
\newblock Optimized first-order methods for smooth convex minimization.
\newblock {\em Math. Program.}, 159(1–2):81–107, sep 2016.

\bibitem{Drori2018EfficientFM}
Yoel Drori and Adrien Taylor.
\newblock Efficient first-order methods for convex minimization: a constructive
  approach.
\newblock {\em Mathematical Programming}, 184:183 -- 220, 2018.

\bibitem{Kim2021gradient}
Donghwan Kim and Jeffrey~A. Fessler.
\newblock Optimizing the efficiency of first-order methods for decreasing the
  gradient of smooth convex functions.
\newblock {\em J. Optim. Theory Appl.}, 188(1):192–219, jan 2021.

\bibitem{Kim2021AccelPPM}
Donghwan Kim.
\newblock Accelerated proximal point method for maximally monotone operators.
\newblock {\em Math. Program.}, 190(1–2):57–87, nov 2021.

\bibitem{taylor2022optimal}
Adrien Taylor and Yoel Drori.
\newblock An optimal gradient method for smooth strongly convex minimization.
\newblock {\em Math. Program.}, 199(1–2):557–594, jun 2022.

\bibitem{Bertsekas2015ConvexOA}
Dimitri~P. Bertsekas.
\newblock Convex optimization algorithms.
\newblock 2015.

\bibitem{Teboulle2023}
Marc Teboulle and Yakov Vaisbourd.
\newblock An elementary approach to tight worst case complexity analysis of
  gradient based methods.
\newblock {\em Math. Program.}, 201(1–2):63–96, oct 2022.

\bibitem{young1953}
David Young.
\newblock On richardson's method for solving linear systems with positive
  definite matrices.
\newblock {\em Journal of Mathematics and Physics}, 32(1-4):243--255, 1953.

\bibitem{pedregosa2021nomomentum}
Fabian Pedregosa.
\newblock Acceleration without momentum, 2021.

\bibitem{Oymak2021}
Samet Oymak.
\newblock Provable super-convergence with a large cyclical learning rate.
\newblock {\em {IEEE} Signal Process. Lett.}, 28:1645--1649, 2021.

\bibitem{Daccache2019}
Antoine Daccache.
\newblock Performance estimation of the gradient method with fixed arbitrary
  step sizes.
\newblock Master's thesis, Universit\'e Catholique de Louvain, 2019.

\bibitem{Eloi2022}
Diego Eloi.
\newblock Worst-case functions for the gradient method with fixed variable step
  sizes.
\newblock Master's thesis, Universit\'e Catholique de Louvain, 2022.

\bibitem{LEBEDEV1971155}
V.I. Lebedev and S.A. Finogenov.
\newblock Ordering of the iterative parameters in the cyclical chebyshev
  iterative method.
\newblock {\em USSR Computational Mathematics and Mathematical Physics},
  11(2):155--170, 1971.

\bibitem{Agarwal2021AccelerationVF}
Naman Agarwal, Surbhi Goel, and Cyril Zhang.
\newblock Acceleration via fractal learning rate schedules.
\newblock In {\em Proceedings of the 38th International Conference on Machine
  Learning}, volume 139, pages 87--99, 2021.

\bibitem{Goujaud2022}
Baptiste Goujaud, Damien Scieur, Aymeric Dieuleveut, Adrien~B. Taylor, and
  Fabian Pedregosa.
\newblock Super-acceleration with cyclical step-sizes.
\newblock In {\em Proceedings of The 25th International Conference on
  Artificial Intelligence and Statistics}, pages 3028--3065, 2022.

\bibitem{Lee2019}
Ching-Pei Lee and Stephen Wright.
\newblock First-order algorithms converge faster than $o(1/k)$ on convex
  problems.
\newblock In {\em Proceedings of the 36th International Conference on Machine
  Learning}, volume~97, pages 3754--3762, 2019.

\bibitem{Loshchilov2016SGDRSG}
Ilya Loshchilov and Frank Hutter.
\newblock Sgdr: Stochastic gradient descent with restarts.
\newblock {\em ArXiv}, abs/1608.03983, 2016.

\bibitem{Smith2015CyclicalLR}
Leslie~N. Smith.
\newblock Cyclical learning rates for training neural networks.
\newblock {\em 2017 IEEE Winter Conference on Applications of Computer Vision
  (WACV)}, pages 464--472, 2015.

\bibitem{Smith2017SuperconvergenceVF}
Leslie~N. Smith and Nicholay Topin.
\newblock Super-convergence: very fast training of neural networks using large
  learning rates.
\newblock In {\em Defense + Commercial Sensing}, 2017.

\bibitem{nesterov-textbook}
Yurii Nesterov.
\newblock {\em Introductory Lectures on Convex Optimization: A Basic Course}.
\newblock Springer Publishing Company, Incorporated, 1 edition, 2014.

\bibitem{Diaz2023bundleRates}
Mateo D\'{\i}az and Benjamin Grimmer.
\newblock Optimal convergence rates for the proximal bundle method.
\newblock {\em SIAM Journal on Optimization}, 33(2):424--454, 2023.

\bibitem{Drori2014}
Yoel Drori.
\newblock {\em Contributions to the complexity analysis of optimization
  algorithms.}
\newblock PhD thesis, Tel-Aviv University, 2014.

\bibitem{jang2023computerassisted}
Uijeong Jang, Shuvomoy~Das Gupta, and Ernest~K. Ryu.
\newblock Computer-assisted design of accelerated composite optimization
  methods: Optista, 2023.

\bibitem{Klerk2020}
Etienne De~Klerk, Fran\c{c}ois Glineur, and Adrien~B. Taylor.
\newblock Worst-case convergence analysis of inexact gradient and newton
  methods through semidefinite programming performance estimation.
\newblock {\em SIAM Journal on Optimization}, 30(3):2053--2082, 2020.

\bibitem{Bellavia2019ARI}
Stefania Bellavia, Jacek Gondzio, and Margherita Porcelli.
\newblock A relaxed interior point method for low-rank semidefinite programming
  problems with applications to matrix completion.
\newblock {\em Journal of Scientific Computing}, 89, 2019.

\bibitem{ding2021storageOptimal}
Lijun Ding, Alp Yurtsever, Volkan Cevher, Joel~A. Tropp, and Madeleine Udell.
\newblock An optimal-storage approach to semidefinite programming using
  approximate complementarity.
\newblock {\em SIAM Journal on Optimization}, 31(4):2695--2725, 2021.

\bibitem{Yurtsever2021scalableSDP}
Alp Yurtsever, Joel~A. Tropp, Olivier Fercoq, Madeleine Udell, and Volkan
  Cevher.
\newblock Scalable semidefinite programming.
\newblock {\em SIAM Journal on Mathematics of Data Science}, 3(1):171--200,
  2021.

\bibitem{Souto2022operatorSDP}
Mario Souto, Joaquim~D. Garcia, and Álvaro Veiga.
\newblock Exploiting low-rank structure in semidefinite programming by
  approximate operator splitting.
\newblock {\em Optimization}, 71(1):117--144, 2022.

\bibitem{wang2023GTRS}
Alex~L. Wang, Yunlei Lu, and Fatma Kilin\c{c}-Karzan.
\newblock Implicit regularity and linear convergence rates for the generalized
  trust-region subproblem.
\newblock {\em SIAM Journal on Optimization}, 33(2):1250--1278, 2023.

\bibitem{ding2023}
Lijun Ding and Benjamin Grimmer.
\newblock Revisiting spectral bundle methods: Primal-dual (sub)linear
  convergence rates.
\newblock {\em SIAM Journal on Optimization}, 33(2):1305--1332, 2023.

\bibitem{wang2022accelerated}
Alex~L. Wang and Fatma Kilinc-Karzan.
\newblock Accelerated first-order methods for a class of semidefinite programs,
  2022.

\bibitem{Henrion2016SPECTRAA}
Didier Henrion, Simone Naldi, and Mohab Safey~El Din.
\newblock Spectra – a maple library for solving linear matrix inequalities in
  exact arithmetic.
\newblock {\em Optimization Methods and Software}, 34:62 -- 78, 2016.

\end{thebibliography}
    }
    \appendix
    \section{A Computer-Generated Straightforwardness Certificate with $\Delta= 10^{-5}$ and $\epsilon=10^{-9}$ for $h=(1.5, 2.2, 1.5, 12.0, 1.5, 2.2, 1.5)$ }
Below is a certificate $(\hat\lambda,\hat\gamma)\in\mathcal{S}_{h,\epsilon,\Delta}$, completely computer generated, proving a $LD^2/(3.1999999\times T)$ rate for the pattern of length $t=7$ in Table~\ref{tab:patterns-and-rates}. Given the length of these $9\times 9$ matrices, we display their first five and last four columns separately below. Exact calculations verifying the feasibility of these values are given in the associated publicly posted \texttt{Mathematica} notebook.
$$\hat\lambda_{1:5} = \begin{bmatrix}
0 & 0 & 0 & 0 & 0  \\
0 & 0 & \frac{8837407518919583}{4503599627370496} & \frac{5370688140802311}{2305843009213693952} & \frac{960254226721649}{144115188075855872}  \\
0 & \frac{2191522964335457}{2251799813685248} & 0 & \frac{4118290538555273}{2251799813685248} & \frac{2688409565283275}{4503599627370496}  \\
0 & \frac{6721678331720401}{2305843009213693952} & \frac{4407991053556385}{18014398509481984} & 0 & \frac{5801123626984329}{1125899906842624}  \\
0 & \frac{1313681189860411}{1152921504606846976} & \frac{2695784755734549}{2251799813685248} & \frac{8068866010524833}{2251799813685248} & 0  \\
0 & \frac{1137203495150241}{4611686018427387904} & \frac{4688926225212825}{9223372036854775808} & \frac{4572972758977097}{4611686018427387904} & \frac{1182579142099203}{1152921504606846976}  \\
0 & \frac{5789750435206701}{18446744073709551616} & \frac{4956986009057139}{9223372036854775808} & \frac{5204247088958345}{4611686018427387904} & \frac{5375927246703545}{9223372036854775808}  \\
0 & \frac{7965115934238233}{36893488147419103232} & \frac{6653418691702949}{9223372036854775808} & \frac{4849791056907609}{4611686018427387904} & \frac{7288685819438951}{9223372036854775808} \\
0 & \frac{1977810093374139}{9223372036854775808} & \frac{1222316311137735}{1152921504606846976} & \frac{6194623724653895}{4611686018427387904} & \frac{1504051934577545}{1152921504606846976}  \\
\end{bmatrix}, $$
$$\hat\lambda_{6:9} = \begin{bmatrix}
 0 & 0 & 0 & 0 \\
 \frac{4697224493034383}{9223372036854775808} & \frac{8368394272075953}{2305843009213693952} & \frac{6128278626086993}{9223372036854775808} & \frac{80543723501136599}{36893488147419103232} \\
 \frac{7733370871946025}{4611686018427387904} & \frac{5388133457257119}{2305843009213693952} & \frac{8119439550484479}{4611686018427387904} & \frac{19797186857495837}{9223372036854775808} \\
 \frac{2740682573240027}{576460752303423488} & \frac{5290333777740781}{1152921504606846976} & \frac{4877381506142205}{1152921504606846976} & \frac{12485929602009775}{2305843009213693952} \\
 \frac{8956652459407733}{72057594037927936} & \frac{4723166920241497}{18014398509481984} & \frac{5356440120675597}{18014398509481984} & \frac{339319020784307309}{1152921504606846976} \\
 0 & \frac{772844649199827}{4503599627370496} & \frac{2338413531710477}{288230376151711744} & \frac{13381432557675475}{2305843009213693952} \\
 \frac{6929007831194361}{288230376151711744} & 0 & \frac{8173541145090013}{36028797018963968} & \frac{3837009581398546887}{18446744073709551616} \\
 \frac{3923691798295005}{144115188075855872} & \frac{1250438084247729}{288230376151711744} & 0 & \frac{18985683012247614283}{36893488147419103232} \\
 \frac{13037057806369}{2251799813685248} & \frac{7367859533233689}{576460752303423488} & \frac{2876396710542189}{288230376151711744} & 0 \\
\end{bmatrix}, $$
$$ \hat\gamma_{1:5} = \begin{bmatrix}
0 & \frac{1445047782665419}{360287970189639680} & \frac{4403953050470099}{36028797018963968} & \frac{12063929807726837}{144115188075855872} & \frac{3158567943322891003}{144115188075855872}  \\
0 & 0 & -\frac{4876214136104831}{140737488355328} & -\frac{8833525210676647}{1125899906842624} & -\frac{2173342829362743}{281474976710656}  \\
0 & -\frac{3452744755754301}{70368744177664} & 0 & -\frac{6337493011708677}{36028797018963968} & -\frac{6175482448055731}{2251799813685248} \\
0 & \frac{5865189115672077}{36028797018963968} & -\frac{7694464236006067}{281474976710656} & 0 & -\frac{5761085953121451}{144115188075855872}  \\
0 & \frac{8990085085489805}{562949953421312} & \frac{8180724221663923}{9007199254740992} & -\frac{170650240960227}{70368744177664} & 0 \\
0 & \frac{814636363991245}{2251799813685248} & -\frac{5002382223044879}{9007199254740992} & -\frac{5824324289872487}{4503599627370496} & -\frac{5420042187723199}{2251799813685248}  \\
0 & \frac{3576868018334213}{18014398509481984} & -\frac{3095322656598895}{18014398509481984} & -\frac{4741216870166437}{4503599627370496} & \frac{8742107678118927}{18014398509481984} \\
0 & \frac{7467009600885335}{72057594037927936} & \frac{7502387060304591}{18014398509481984} & -\frac{2299019529082251}{2251799813685248} & \frac{5226016391129105}{4503599627370496}  \\
0 & \frac{5171454268783955}{18014398509481984} & \frac{4976089659881855}{4503599627370496} & -\frac{849946842273963}{1125899906842624} & \frac{4041378073126239}{2251799813685248}  \\
\end{bmatrix}, $$
$$ \hat\gamma_{6:9} = \begin{bmatrix}
 \frac{842771278878770475}{288230376151711744} & \frac{1763107326300397405}{288230376151711744} & \frac{490223898427757151}{72057594037927936} & \frac{246036905476920105}{36028797018963968} \\
-\frac{7575374264747013}{1125899906842624} & -\frac{847735720154307}{140737488355328} & -\frac{3644178695304033}{562949953421312} & -\frac{2065817857485759}{281474976710656} \\
\frac{256866650877453}{562949953421312} & -\frac{2501054473773415}{1125899906842624} & -\frac{1008218866207467}{281474976710656} & -\frac{6366578400890641}{2251799813685248} \\
 \frac{4819157742253121}{2251799813685248} & \frac{8130105639853323}{18014398509481984} & \frac{2019419188776927}{1125899906842624} & \frac{2348470109478957}{281474976710656} \\
 \frac{451884763914337}{281474976710656} & \frac{7742823693376695}{18014398509481984} & \frac{2908076698229235}{2251799813685248} & -\frac{6004115926843261}{1125899906842624} \\
0 & -\frac{3545908575737889}{288230376151711744} & \frac{3082103085688847}{18014398509481984} & \frac{3631847684938235}{1125899906842624} \\
 -\frac{6392823462111415}{72057594037927936} & 0 & \frac{226196870745667}{9007199254740992} & -\frac{5233938836038739}{2251799813685248} \\
 -\frac{3454102047914875}{9007199254740992} & -\frac{2760923356849777}{36028797018963968} & 0 & -\frac{664739622472857}{2251799813685248} \\
 -\frac{1957421956488181}{4503599627370496} & -\frac{7174137438907087}{4503599627370496} & -\frac{4503200630060789}{36028797018963968} & 0 \\
\end{bmatrix} .$$

\end{document}